\documentclass{aptpub}
\usepackage{graphicx}
\usepackage[normalem]{ulem}
\usepackage{bookmark}
\usepackage{color}
\usepackage{amsmath}
\DeclareMathOperator{\dist}{dist}

\authornames{Feng Zhao {\it et al.}} 
\shorttitle{On the expected number of facets for the convex hull of samples} 


\def\E{\mathbb{E}}
\def\R{\mathbb{R}}
\def\d{\mathrm{d}}
\begin{document}

\title{On the expected number of facets for the convex hull of samples from spherically symmetric distributions} 

\authorone[Department of Electronics, Tsinghua University]{Feng Zhao} 

\addressone{Tsinghua University} 
\emailone{zhaof17@mails.tsinghua.edu.cn} 

\authortwo[Tsinghua Berkeley Shenzhen Institute]{Xinyi Tong}
\addresstwo{Tsinghua Berkeley Shenzhen Institute}
\emailtwo{txy18@mails.tsinghua.edu.cn}

\authorthree[Tsinghua Berkeley Shenzhen Institute]{Shao-Lun Huang}
\addressthree{Tsinghua Berkeley Shenzhen Institute}
\emailtwo{shaolun.huang@sz.tsinghua.edu.cn}

\begin{abstract}
This paper studies the convex hull of $d$-dimensional samples i.i.d. generated from spherically symmetric distributions. Specifically, we derive a complete integration formula for the expected facet number of the convex hull.
This formula is
with respect to the CDF of the radial distribution.
 As the number of samples approaches infinity,
 the integration formula enables us to obtain the asymptotic value of the expected facet number
 for three categories of spherically symmetric distributions.
 Additionally, the asymptotic result can be applied to estimating the sample complexity in order that
 the probability measure of the convex hull
 tends to one.
\end{abstract}

\keywords{Convell hull; spherically symmetric distribution; expectation of facet number}

\ams{52A22;60D05}{52A20;52B11}

\section{Introduction}\label{sec:intro} 


Let $X_1, X_2, \dots, X_N$ be i.i.d. random points generated from
a spherically symmetric distribution in $\mathbb{R}^d$.
For the convex hull $\mathrm{H}_N$ of these $N$ points, we study the number of its facets $F_N$,
where a facet of a $d$-dimensional object is one of its $(d-1)$-dimensional faces.
In this paper, we study the mathematical expectation
of $F_N$, denoted as $\E[F_N]$, and derive its asymptotic value as $N\to \infty$.

Our motivation to study $\E[F_N]$ is to give an upper bound of
the probability $p_{N,d}=P(X_{N+1} \not\in \mathrm{H}_N)$,
which is the probability that the $(N+1)$-th point falls outside the convex hull.
If we accept the concept that interpolation occurs when $X_{N+1}$ belongs to $H_N$,
$p_{N,d}$ can be applied to explain
why interpolation almost surely never occurs in high dimensional space \cite{balestriero2021learning}.
In other words, for a large $d$, $p_{N,d}$ is near $1$, unless exponentially large sample size $N$ is available.
Using the asymptotic result on $\E[F_N]$, we obtain
a sufficient condition under which the interpolation almost surely occurs.

The asymptotic expression of $\E[F_N]$ as $N\to \infty$
was studied initially by R{\'e}nyi and Sulanke \cite{renyi1963konvexe}.
They considered
bivariate Gaussian distribution and uniform distribution
in a planar convex region.
Later on,  R{\'e}nyi's work was generalized by
Carnal \cite{carnal1970konvexe}, who
classified symmetric 2-D distributions
into three categories according to their tails:
polynomial, exponential, and truncated tails.
Then he obtained the asymptotic expression of $\E[F_N]$
for each category of distributions.

The study of $\E[F_N]$ for $d>2$ was made firstly by
Raynaud
\cite{raynaud1970enveloppe}.
He obtained the asymptotic formula of $\E[F_N]$
for uniform distribution in a hyperball
and standard Gaussian distribution in $\mathbb{R}^d$.
Afterwards, following Carnal, Dwyer \cite{dwyer1991convex}
estimated the order of $\E[F_N]$ about $N$
for three different distribution families.

Dwyer's work on the estimation of $\E[F_N]$ only captured its relationship with $N$ but ignored its
dependence on $d$. To our best knowledge, previous works have not revealed the general asymptotic expression of $\E[F_N]$ in $\R^d$.
In Section \ref{sec:int_f}, we develop a method of obtaining $\E[F_N]$ by generalizing Carnal's integration formula
from $d=2$ to higher dimensions.
In Section \ref{sec:three_distriutions}, we then derive the explicit asymptotic
expressions of $\E[F_N]$ for three different distribution families.
In Section \ref{sec:sample_complexity}, based on the asymptotic result of $\E[F_N]$,
we provide a sufficient condition for the convergence of $p_{N,d}$ to zero as $N,d \to \infty$.
The main contribution of this paper is to provide a complete integration formula for $\E[F_N]$,
and following the classification of distributions of previous authors \cite{carnal1970konvexe,dwyer1991convex},
we derive the asymptotic expression
of $\E[F_N]$ for each category.

Below we define some notations to be used throughout this paper.
Without specific emphasis, all $d$-dimensional distributions considered in this paper are spherically symmetric.
Let $X=(X^{(1)},\dots, X^{(d)})$ follow such a distribution.
Then we use $F(x)=P(|X|\geq x)$ to denote the probability that a random point lies outside
a $d$-sphere ($d$-dimensional sphere centered at origin) with radius $x$.
The marginal distribution of each component of $X$ is determined by $G(x)=P(X^{(1)}\geq x)$.
Let $\dist$ represent the distance from the origin to the hyperplane spanned by $X_1, \dots, X_d$.
Specifically, in 2-D, the hyperplane is the straight line passing through $X_1$ and $X_2$.
We use $H(x)=P(\dist\geq x)$ to denote the probability that $\dist$ is larger than $x$.
When $n$ tends to infinity, the functions $f(n)$ and $g(n)$ are asymptotic equivalent if $\lim_{n\to \infty} \frac{f(n)}{g(n)}=1$.
We denote this relationship as $f(n) \sim g(n)$.

\section{Related work}
Below we list some other related works which are not mentioned in the previous section.
For $d=2,3$, Efron \cite{efron1965convex} obtained the explicit integration formula for $\E[F_N]$ and $p_{N,d}$ 
when samples follow Gaussian distribution or uniform distribution within a unit sphere.

Davis et al. \cite{davis1987convex} found the relationship between the distributions with algebraic tails $F(x) \sim x^{-k}$ and Poisson random process.
They showed that for $k>0, d=2$,
$\mathrm{H}_N$ converges to the convex hull of a Poisson random process,
and
the limit of $\E[F_N]$ can be derived from this random process.
The special case $k=0, d=2$ is treated in a later paper, in which the
limit distribution of $F_N$ is computed \cite{aldous1991number}.

Studies related to distributions with exponential tails mainly focus on Gaussian distribution.
Kabluchko et al. \cite{kabluchko2020absorption} obtained explicit expressions for $p_{N,d}$;
Affentranger \cite{affentranger1991convex} derived $\E[F_N]$;
Hueter et al. \cite{hueter1999limit} studied the concentration property of $F_N$ and obtained
an upper bound for $\E[F_N]$ sharper than the bound in \cite{dwyer1991convex}.

For truncated tails,
Affentranger \cite{affentranger1991convex} studied one sub-category called beta-typed distribution and obtained
the asymptotic value of $\E[F_N]$.

Besides the number of facets $\E[F_N]$,
the asymptotic value of other quantities related with $\mathrm{H}_N$, such as the volume, and the surface area,
were systematically investigated in a general framework involving the property of facets
\cite{schneider2008stochastic, barany2008random}.

 \section{Integration formula}\label{sec:int_f}
In this section, we give the formula of $\E[F_N]$ with respect to the radial function $F(x)$.

Carnal \cite{carnal1970konvexe}
obtained
this formula for 2-D distributions,
which is given as follows:
\begin{align}
     \E[F_N] &= \binom{N}{2} \int_0^{\infty} 
     \left[G(x)^{N-2} + (1-G(x))^{N-2} \right]|\mathrm{d} H(x)| 
     \label{eq:E_F_N_2_d}
\end{align}
The integration formula of $\E[F_N]$ involves the function $G(x)$ and $H(x)$,
whose definitions are given at the end of Section \ref{sec:intro}.
Both functions are expressed with $F(x)$ in the following way:
\begin{align}
   G(x) &=\frac{1}{\pi} \int_x^{\infty}\arccos\frac{x}{y} |\mathrm{d} F(y)| \\
     H(x) &= \frac{2}{\pi} \int_x^{\infty} \arccos \frac{x}{y} |\mathrm{d}(F^2(y))|
     \label{eq:H_expression_2_dim}
\end{align}

For $d\geq 2$, the integration formula is generalized as
\begin{align}
     \E[F_N] &= \binom{N}{d} \int_0^{\infty} 
     \left[G(x)^{N-d} + (1-G(x))^{N-d} \right]|\mathrm{d} H(x)| 
     \textrm{ for } N \geq d+1 \label{eq:E_F_N_d}
\end{align}
Formula \eqref{eq:E_F_N_d} first appeared in the proof section of \cite{raynaud1970enveloppe}
and can further be generalized to compute other combinatorial properties of $\mathrm{H}_N$ (like surface area and volume)
\cite{barany2008random}.
Dwyer obtained the integration formula for $G(x)$ in $d\geq 2$ as:
\begin{align}\label{eq:G_d_kappa}
     G(x) & = \int_x^{+\infty} \kappa \left(\frac{x}{y} \right) |\mathrm{d}F(y)| \\
     \kappa(r) & = \frac{\Gamma(\frac{d}{2})}
     {\sqrt{\pi}\Gamma(\frac{d-1}{2})}\int_r^{1}
     (1-u^2)^{(d-3)/2}\mathrm{d}u\label{eq:kappa_r}
\end{align}
where $\kappa(r)$ is the fraction of the surface area of a unit $d$-sphere
cut off by a plane at distance $r$ from the origin.

We cannot compute $\E[F_N]$ from $F(x)$ yet, since no formula for $H(x)$ with respect to $F(x)$ is given in previous studies.
Our main theorem solves this problem by giving the integration formula for $H(x)$ in $d\geq 2$:
\begin{theorem}\label{thm:H}
For $d\geq 2$, the integration formula for $H(x)$ is given as
\begin{equation}
     H(x) = \frac{2}{\pi}
     \int_x^{+\infty} \arccos\frac{x}{y}
     |\mathrm{d} (K^d(y))|\label{eq:H_expression_d_dim}
\end{equation}
where the auxiliary function $K(x)$ is defined in the following way:
\begin{align}
     \lambda_d(x) & :=(1-x^2)^{\frac{d-2}{2}}
     \label{eq:lambda_r}\\
     K(x) &:=P\left(\sqrt{(X^{(1)})^2+(X^{(2)})^2}>x \right)=
     \int_x^{+\infty}
     \lambda_d \left(\frac{x}{y} \right)|\d F(y)|
     \label{eq:K_x}
\end{align}
\end{theorem}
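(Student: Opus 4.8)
The plan is to reduce the $d$-dimensional hyperplane-distance problem to the planar case, for which \eqref{eq:E_F_N_2_d}--\eqref{eq:H_expression_2_dim} already encode the answer, and to show that the only change is the replacement of the single-point radial tail $F$ by the tail $K$ of the projection of a point onto a $2$-plane. Since $X_1,\dots,X_d$ almost surely span a hyperplane $\Pi$ with a well-defined unit normal $\nu$ and distance $p=\dist$ from the origin, I would first write each point as $X_i=p\,\nu+Z_i$ with $Z_i\perp\nu$, so that $\langle X_i,\nu\rangle=p$ for every $i$. Spherical symmetry makes $\nu$ uniform on the sphere and independent of the rest of the data, so the normal direction can be integrated out at the end.

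Next I would pass to a $2$-plane $Q$ containing $\nu$, with second unit direction $w\perp\nu$. Because the foot of the perpendicular from the origin to $\Pi$ lies along $\nu\in Q$, the line $\ell=\Pi\cap Q$ is at the same distance $p$ from the origin measured inside $Q$. Moreover the orthogonal projection of $X_i$ onto $Q$ equals $p\,\nu+s_i w$ with $s_i=\langle X_i,w\rangle$, so all $d$ projections lie on $\ell$ and have norm $\rho_i=\sqrt{p^2+s_i^2}$. Thus $\dist$ becomes the distance from the origin to the common line of $d$ collinear planar points, governed by the projection norms $\rho_i$. By spherical symmetry the law of a projection onto a fixed $2$-plane is exactly $K$: conditioning on $|X_i|=y$ and using that a uniform point of the unit sphere has projection norm exceeding $r$ with probability $\lambda_d(r)=(1-r^2)^{(d-2)/2}$ reproduces \eqref{eq:K_x}.

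I would then invoke the planar mechanism behind \eqref{eq:H_expression_2_dim}. Among the $d$ collinear projections let $y=\min_i\rho_i$ be the radius of the one nearest the origin; in the right triangle formed by the origin, the foot of the perpendicular, and this nearest projection one has $p=y\cos\alpha$, where $\alpha$ is the angle at the origin between $\nu$ and the direction to the nearest projection. The key claim is that, conditionally on $y$, this angle is uniform, so that $\dist\stackrel{d}{=}y\,|\!\cos\Theta|$ with $\Theta$ uniform and independent of $y$; this yields the factor $\tfrac{2}{\pi}\arccos\tfrac{x}{y}=P(y|\!\cos\Theta|\ge x)$. Since $\rho_1,\dots,\rho_d$ have tail $K$, the minimum $y$ has tail $K^d$, so $|\d(K^d(y))|$ is its density, and integrating the conditional probability against it produces \eqref{eq:H_expression_d_dim}.

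The main obstacle is the step that collapses the $d$-dimensional configuration onto this planar picture. Two points need care. First, the projections above share the common $\nu$-component $p$, so the claim that they behave like i.i.d.\ samples with tail $K$ is not literal: the reduction projects onto a $2$-plane that contains the \emph{random} normal, and the change of variables must be performed with the correct weight. This is where an affine Blaschke--Petkantschin decomposition, with its $(d-1)$-dimensional simplex-volume factor, enters, and one must show that integrating it over the extra $d-2$ directions produces precisely the tail $K^d$. Second, I must prove that the conditional law of $\dist$ given the smallest projection radius is exactly $\tfrac{2}{\pi}\arccos(x/y)$, i.e.\ that $\alpha$ is uniform and the remaining $d-1$ points affect only the distribution of the minimum and not this angular factor. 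Establishing this decoupling, which already underlies Carnal's planar identity, is the crux; once it is in place the formula assembles directly.
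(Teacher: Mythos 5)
Your plan correctly identifies the distributional content of \eqref{eq:H_expression_d_dim} (that $\dist$ behaves like the minimum of $d$ tail-$K$ variables times an independent $|\cos\Theta|$), and you are right to flag where the difficulty sits --- but both of the steps you flag are left open, and they are not routine: they are the entire content of the theorem. The projections of $X_1,\dots,X_d$ onto a $2$-plane $Q$ containing the random normal $\nu$ are not i.i.d.\ with tail $K$, nor even marginally tail-$K$, since $Q$ is determined by the very configuration whose distance you are computing and all $d$ projections share the common component $p=\dist$. Saying that a Blaschke--Petkantschin decomposition ``must show that integrating over the extra $d-2$ directions produces precisely the tail $K^d$'' states the goal rather than achieving it; likewise the claim that, given the smallest projection radius, the angle $\alpha$ is uniform and independent of everything else is exactly the decoupling that has to be established, and it is not a consequence of spherical symmetry alone once the $d$ projections are dependent. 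As written, the argument is a consistency check against the answer, not a derivation of it.

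The paper closes this gap by a different and more incremental route that avoids ever having to treat all $d$ projections simultaneously. It defines $F_1(x)=P(\dist(P_1,\dots,P_{d-1})\geq x)$ for the $(d-2)$-flat through $d-1$ points and proves $F_1=K^{d-1}$ by induction on dimension: one collapses the line $P_{d-2}P_{d-1}$ to a point in the perpendicular $(d-1)$-space, whose radial tail is the two-point distance function $F_0$ of Lemma \ref{lem:F_0}, and the induction step is carried by the explicit integral identity of Lemma \ref{lem:K_F_relationship} (itself resting on the computation in Lemma \ref{lem:cn_integration}), which shows $\int_u^{\infty}(1-u^2/x^2)^{(d-3)/2}|\d F_0(x)|=K^2(u)$. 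Only at the last step is a $2$-plane introduced, namely the one perpendicular to the flat spanned by $P_2,\dots,P_d$; that plane is independent of $P_1$, so the projection of $P_1$ onto it is \emph{literally} tail-$K$, and Carnal's asymmetric two-point planar formula \eqref{eq:H_eq_n_arccos_geometric} applies with radial tails $F_1=K^{d-1}$ and $K$. The symmetric ``minimum of $d$ tail-$K$ variables'' form only appears after the algebraic collapse $K^{d-1}\,|\d K|$ into $\frac{1}{d}|\d(K^d)|$. If you want to pursue your route, the honest version of it is essentially to prove these lemmas under another name; the uniform-angle-given-the-minimum claim would then be a corollary of the theorem rather than an ingredient of its proof.
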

It is easy to observe that \eqref{eq:H_expression_d_dim} reduces to 
\eqref{eq:H_expression_2_dim} when $d=2$.
For standard Gaussian distribution in $\R^d$,
$X^{(1)}, X^{(2)}$ are independent
Gaussian random variables, and we can verify that \eqref{eq:K_x} holds with $K(x) = e^{-x^2/2}$.
In fact, $1-K(x)$ is the CDF of Rayleigh distribution.
The proof of Theorem \ref{thm:H}
is provided in Appendix \ref{app:th}.

\section{Three distribution families}\label{sec:three_distriutions}
Notice that in \eqref{eq:E_F_N_d}, $G(x)<\frac{1}{2}$ for $x>0$. Therefore the
term $G(x)^{N-d}$ decays at an exponential rate as $N-d\to \infty$.
As a result, we obtain
\begin{align}
     \E[F_N] \sim \binom{N}{d} \int_0^{+\infty} 
      (1-G(x))^{N-d} |\d H(x)| \textrm{ as } N-d\to \infty
     \label{eq:E_F_N_d_sim}
\end{align}
Let $\E[V_N]$ represent the expected number of
vertices of $\mathrm{H}_N$.
By far it is not clear whether there exist explicit relations
between $\E[F_N]$ and $\E[V_N]$
for general spherical symmetric distributions.
However, we can give some estimations by inequality.
Using Corollary 19.6 of \cite{brondsted2012introduction}, we have the following
inequality:
\begin{equation}\label{eq:F_V_upper}
     F_N \geq (d-1) V_N - (d+1)(d-2)
 \end{equation}
Combined with $p_{N,d} = \frac{\E[V_{N+1}]}{N+1}$, which comes from
\cite{efron1965convex}, we have the following upper bound for $p_{N,d}$:
\begin{equation}\label{eq:p_N_d_bound}
    p_{N,d} \leq \frac{\E[F_N]}{d N} \textrm{ as } N \gg d
\end{equation}

We use the symbol $a \gg b$ if $a$ is a function of $b$ and $\lim_{b\to \infty} \frac{a}{b} = \infty$.
Based on \eqref{eq:E_F_N_d_sim}, in the following
we derive the asymptotic expression of $\E[F_N]$
for three different distribution families.

To precisely define these distribution families, we introduce the concept of slowly varying function.
\begin{definition}
A function $L(x)$ is
slowly varying as $x\to \infty$
if for all $\lambda>0$,
$\lim_{x\to\infty}\frac{L(\lambda x)}{L(x)}=1$
holds.
\end{definition}

\subsection{Distributions with polynomial tails}

With the help of a slowly varying function $L(x)$,
distributions with polynomial tails are written
in the following form:
\begin{equation}\label{eq:F_poly_tail}
     F(x) = x^{-k} L(x), k\geq 0
\end{equation}

Dwyer\cite{dwyer1991convex} has obtained $G(x)$ as:
\begin{equation}\label{eq:g_poly_tail}
     G(x) \sim \frac{\Gamma(\frac{d}{2})}{2\sqrt{\pi} \Gamma(\frac{d-1}{2})}
     B\left(\frac{k+1}{2}, \frac{d-1}{2}\right) F(x)  \textrm{ as } x\to \infty
\end{equation}

From Theorem \ref{thm:H}, we obtain the asymptotic
expression of $H(x)$ and $\E[F_N]$, which is
summarized in the following theorem:
\begin{theorem}\label{thm:poly_tails}
     For distributions with polynomial tails defined in \eqref{eq:F_poly_tail},
     we have
\begin{equation}\label{eq:H_poly_tail_exp}
     H(x) \sim \frac{2^d \pi^{(d-1)/2}\Gamma^d(\frac{k}{2}+1)
     \Gamma(\frac{dk+1}{2})}{
         \Gamma^d(\frac{k+1}{2}) \Gamma(\frac{dk}{2}+1)} G(x)^d 
         \textrm{ as } x\to \infty
\end{equation}
and 
\begin{equation}\label{eq:efn_poly_second_deri}
    \E[F_N] \sim \frac{2^d \pi^{(d-1)/2}\Gamma^d(\frac{k}{2}+1)
    \Gamma(\frac{dk+1}{2})}{
        \Gamma^d(\frac{k+1}{2}) \Gamma(\frac{dk}{2}+1)}
        \textrm{ as } N \to \infty, d \textrm { is fixed}
\end{equation}
\end{theorem}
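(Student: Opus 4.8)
The plan is to prove the two claims in sequence: first establish the pointwise tail asymptotic \eqref{eq:H_poly_tail_exp}, namely $H(x)\sim C\,G(x)^d$ with $C$ the constant appearing in \eqref{eq:efn_poly_second_deri}, and then feed this into \eqref{eq:E_F_N_d_sim} to extract $\E[F_N]\to C$. The engine behind both steps is that, for polynomial tails, $F$ is regularly varying of index $-k$, so every integral transform in Theorem \ref{thm:H} can be evaluated to leading order by a change of variables that turns it into a Beta integral.

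First I would compute the asymptotics of the auxiliary function $K$. Writing $|\d F|=-\d F$ and integrating \eqref{eq:K_x} by parts (the boundary terms vanish because $\lambda_d(1)=0$ for $d>2$, while $d=2$ gives $K=F$ directly since $\lambda_2\equiv1$), then substituting $y=x/s$, yields
\begin{equation}
K(x)=(d-2)\int_0^1 F(x/s)\,s\,(1-s^2)^{(d-4)/2}\,\d s.
\end{equation}
Since $L$ is slowly varying, $F(x/s)\sim s^{k}F(x)$ for each fixed $s$, and carrying this inside the integral produces a Beta integral, giving $K(x)\sim c_K F(x)$ with $c_K=\Gamma(\tfrac{k}{2}+1)\Gamma(\tfrac{d}{2})/\Gamma(\tfrac{k+d}{2})$; in particular $K$ is itself regularly varying of index $-k$ and $K^d$ of index $-dk$. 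Repeating the same maneuver on \eqref{eq:H_expression_d_dim} — integrate by parts (the boundary term vanishes since $\arccos 1=0$), substitute $y=x/s$, and use $K^d(x/s)\sim s^{dk}K^d(x)$ — collapses the $\arccos$ kernel into another Beta integral and gives
\begin{equation}
H(x)\sim \frac{1}{\sqrt{\pi}}\frac{\Gamma(\tfrac{dk+1}{2})}{\Gamma(\tfrac{dk}{2}+1)}\,K^d(x)\sim \frac{1}{\sqrt{\pi}}\frac{\Gamma(\tfrac{dk+1}{2})}{\Gamma(\tfrac{dk}{2}+1)}\,c_K^{\,d}\,F(x)^d.
\end{equation}
Finally I would invoke Dwyer's relation \eqref{eq:g_poly_tail}, $G(x)\sim c_G F(x)$, to rewrite $F(x)^d$ in terms of $G(x)^d$; the factors $\Gamma(\tfrac{d}{2})^d$ and $\Gamma(\tfrac{k+d}{2})^d$ cancel and the surviving constant is exactly the one in \eqref{eq:H_poly_tail_exp}, which proves the first claim.

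For the second claim I would start from \eqref{eq:E_F_N_d_sim} and integrate by parts in $x$, which is preferable to differentiating the asymptotic relation $H\sim CG^d$ directly: with $u=G(x)$ (a decreasing bijection of $[0,\infty)$ onto $(0,\tfrac12]$), the boundary contributions are $O(2^{-(N-d)})$ and vanish after multiplication by $\binom{N}{d}$, leaving
\begin{equation}
\E[F_N]\sim \binom{N}{d}(N-d)\int_0^{1/2} H(u)\,(1-u)^{N-d-1}\,\d u,
\end{equation}
where $H$ is read as a function of $u$. Because $(1-u)^{N-d-1}$ concentrates at $u=0$ as $N\to\infty$ and $H(u)\sim C u^d$ there, I would split the integral at a small $\delta$: the tail over $[\delta,\tfrac12]$ is $O((1-\delta)^{N})$ and is killed by the polynomial prefactor, while on $[0,\delta]$ replacing $H(u)$ by $C u^d$ produces $B(d+1,N-d)=d!\,(N-d-1)!/N!$; multiplying by $\binom{N}{d}(N-d)$ telescopes to exactly $C$, and letting $\delta\to0$ gives $\E[F_N]\to C$.

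The main obstacle is rigor in the two ``$\sim$ under the integral sign'' steps rather than the algebra. In the first part the passage $F(x/s)\sim s^k F(x)$ must be justified uniformly enough to interchange the limit with the $\d s$-integral; here I would use the uniform convergence theorem for regularly varying functions (Potter bounds) to dominate $F(x/s)/F(x)$ by an integrable power of $s$ and then apply dominated convergence, with care near $s=0$ (the tail of the domain) and near $s=1$ (where the kernels degenerate). In the second part the analogous obstacle is the Tauberian-style concentration argument: one must show the contribution away from $u=0$ is negligible after multiplication by the growing prefactor $\binom{N}{d}(N-d)\sim N^{d+1}/d!$, and that the error from replacing $H(u)$ by $C u^d$ on $[0,\delta]$ is controlled uniformly in $N$ — both following from the exponential decay of $(1-u)^{N}$ together with the $\varepsilon$-$\delta$ form of $H(u)\sim Cu^d$.
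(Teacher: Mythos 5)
Your proposal is correct and follows essentially the same route as the paper: integrate \eqref{eq:K_x} and \eqref{eq:H_expression_d_dim} by parts, change variables to reduce each to a Beta integral using the regular variation of $F$, convert to $G$ via \eqref{eq:g_poly_tail} (your constants $c_K$, $c_G$ and the final coefficient all check out), and then run a Laplace-type concentration argument on \eqref{eq:E_F_N_d_sim}. The only cosmetic difference is in the last step, where the paper approximates $(1-w)^{N-d}$ by $e^{-(N-d)w}$ and evaluates a Gamma integral to get the prefactor $N!/((N-d)!(N-d)^d)\to 1$, while you keep the exact kernel and obtain the constant via $B(d+1,N-d)$; both are the same Abelian argument.
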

Equation \eqref{eq:efn_poly_second_deri} tells
us that $\E[F_N]$ converges to a constant as $N \to \infty$.
Without consideration of the actual constant, the result of
Theorem \ref{thm:poly_tails} is an refinement to Dwyer's Theorem 1 for the expected number of facets \cite{dwyer1991convex}.
When $d=2$, the result was obtained in (2.4) of \cite{carnal1970konvexe}
and Theorem 4.4 of \cite{davis1987convex}.
\begin{example}
     We consider a special case of polynomial tail called multivariate t-distribution.
     Its pdf is given by
     \begin{equation}\label{eq:pxy_student_t}
          p(x) = \frac{\Gamma((k+d)/2)}{\Gamma(k/2)(k\pi)^{d/2}}
          \left(1+\frac{1}{k}||x||^2
          \right)^{-\frac{k+d}{2}}, x \in \mathbb{R}^d
      \end{equation}
      The parameter $k$ is the degree of freedom for this distribution.
      We can obtain the radius distribution
      $F(x)$ as $F(x) \sim \frac{2\Gamma(\frac{k+d}{2})}{\Gamma(k/2)\Gamma(d/2)} k^{k/2-1} x^{-k}$.
      The marginal distribution is Student's t-distribution. To obtain $G(x)$, which is just the tail area
of the t-distribution, we use an existing asymptotic result found in \cite{andrew1976}.
\begin{equation} \label{eq:eq_dv}
    G(x) \sim k^{\frac{k}{2}-1} \frac{\Gamma \left(\frac{k+1}{2} \right)}
    {\sqrt{\pi} \Gamma\left(\frac{k}{2}\right)}x^{-k}
\end{equation}
After simplification, \eqref{eq:eq_dv} is the same as \eqref{eq:g_poly_tail}.
On the other hand, using similar techniques as (1.4) in \cite{raynaud1970enveloppe},
we can obtain the pdf of the distance of the hyperplane to the origin as
\begin{equation}
     \frac{\d H(x)}{\d x} =  \frac{2}{\sqrt{k} B(\frac{kd}{2},\frac{1}{2})} \left(1 + \frac{x^2}{k} \right)^{-\frac{kd+1}{2}} 
\end{equation}
From above, we get the asymptotic relation $H(x) \sim \frac{2 \Gamma(\frac{kd+1}{2}) k^{kd/2-1}}{d\sqrt{\pi} \Gamma(\frac{kd}{2})}
x^{-kd}$ as $x\to \infty$, which is equivalent to \eqref{eq:H_poly_tail_exp}.

\end{example}
When we allow $d\to \infty$, we treat $N$ as a function of $d$.
If the condition $N/d^2 \to \infty$ is satisfied, we have
\begin{equation}\label{eq:poly_E_F_N_d_infty}
\E[F_N] \sim \sqrt{\frac{2}{\pi dk}}\left(
     \frac{\sqrt{\pi}k \Gamma(k/2)}
     {\Gamma(\frac{k+1}{2})}
 \right)^d \textrm{ for } k>0
\end{equation}

\subsection{Distributions with exponential tails}
In this subsection, we consider distributions satisfying
$x = L(1/F(x)) $, where $L(x)$ is slowly varying. Following Carnal \cite{carnal1970konvexe},
we define the following utility functions:
\begin{align}
     \epsilon(s) & = s (\log (L(s)))' \label{eq:epsilon_s}\\
     v(u) &= -\frac{1}{u} \frac{1}{(\log F(u))'}    
\end{align}
The prime notation represents the symbol for differentiation. When $v(u)$ satisfies
certain regularity conditions prescribed in (2.15) of \cite{carnal1970konvexe},
we can obtain that $\epsilon(s)$ is a slowly varying function.
Dwyer \cite{dwyer1991convex} obtains the expression of $G(x)$ as:
\begin{equation}\label{eq:G_x_exp}
     G(x) \sim \frac{2^{(d-3)/2}}{\sqrt{\pi}}\Gamma\left(\frac{d}{2}\right)
     v^{(d-1)/2}(x) F(x)
      \textrm{ as } x\to \infty
\end{equation}
We make the analysis complete by giving the following theorem.
\begin{theorem}\label{thm:exponential_tails}
     Suppose there exists a slowly
     varying function $L(x)$ such that \linebreak $x=L(1/F(x))$,
     and $\epsilon(s)$ is defined in \eqref{eq:epsilon_s}, then
\begin{equation}\label{eq:H_x_exp}
     H(x) \sim \frac{\pi^{\frac{d-1}{2}} 2^{\frac{d+1}{2}}}{\sqrt{d}}v^{\frac{-d+1}{2}}(x)G^d(x)
     \textrm{ as } x\to \infty
\end{equation}
 and
 \begin{equation}\label{eq:exp_e_f_n}
     \E[F_N]\sim \frac{\pi^{\frac{d-1}{2}} 2^{\frac{d+1}{2}}}{\sqrt{d}} (\epsilon(N))^{-\frac{d-1}{2}}
     \textrm{ as } N \to \infty, d \textrm { is fixed}
 \end{equation}
\end{theorem}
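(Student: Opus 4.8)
The plan is to establish the two displays of Theorem~\ref{thm:exponential_tails} in order: first the tail function $H(x)$ from the integration formula \eqref{eq:H_expression_d_dim} of Theorem~\ref{thm:H}, and then $\E[F_N]$ by substituting that estimate into \eqref{eq:E_F_N_d_sim}. Throughout I will exploit that an exponential tail forces the measure $|\d F|$ to concentrate at the \emph{lower} endpoint $y=x$, so every integral below is a Watson/Laplace-type integral anchored at $y=x$. The natural local coordinate is $y=x\bigl(1+v(x)\tau\bigr)$ with $\tau\ge 0$, under which $x/y\approx 1-v(x)\tau$ and, since $(\log F)'(y)=-1/\bigl(yv(y)\bigr)$ with $v$ asymptotically constant over the window, $F(y)\approx F(x)e^{-\tau}$.

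First I would pin down $K(x)$. In \eqref{eq:K_x} the kernel \eqref{eq:lambda_r} obeys $\lambda_d(x/y)\approx\bigl(2v(x)\tau\bigr)^{(d-2)/2}$ near the endpoint, and integrating against $F(x)e^{-\tau}\,\d\tau$ produces the Gamma integral $\int_0^\infty\tau^{(d-2)/2}e^{-\tau}\,\d\tau=\Gamma(d/2)$, giving $K(x)\sim\Gamma(d/2)\,2^{(d-2)/2}v^{(d-2)/2}(x)F(x)$. As a consistency check, the identical computation applied to Dwyer's kernel \eqref{eq:kappa_r} reproduces \eqref{eq:G_x_exp}, which validates the endpoint method. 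Next I would insert $K$ into \eqref{eq:H_expression_d_dim}. Because $K$ is a slowly varying factor times $F$, one has $(\log K)'\sim(\log F)'$, so on the same window $K^d(y)\approx K^d(x)e^{-d\tau}$ and $|\d(K^d)|\approx dK^d(x)e^{-d\tau}\,\d\tau$, while $\arccos(x/y)\approx\sqrt{2v(x)\tau}$. The resulting integral $\int_0^\infty\sqrt{2v(x)\tau}\,dK^d(x)e^{-d\tau}\,\d\tau$ evaluates through $\int_0^\infty\tau^{1/2}e^{-d\tau}\,\d\tau=\tfrac{\sqrt\pi}{2}d^{-3/2}$ to $H(x)\sim\sqrt{2v(x)/(\pi d)}\,K^d(x)$. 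Substituting the $K$-asymptotic and then eliminating $F$ in favour of $G$ via \eqref{eq:G_x_exp}, the powers of $2$, $\pi$, and $v$ recombine to the stated constant $\tfrac{\pi^{(d-1)/2}2^{(d+1)/2}}{\sqrt d}$ with exponent $v^{(1-d)/2}$, which is exactly \eqref{eq:H_x_exp}.

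For the second display I would feed \eqref{eq:H_x_exp} into \eqref{eq:E_F_N_d_sim}. Writing $H\sim C\,v^{(1-d)/2}G^d$ with $C=\tfrac{\pi^{(d-1)/2}2^{(d+1)/2}}{\sqrt d}$ and differentiating, the contribution of $(v^{(1-d)/2})'$ is negligible against the $G^{d-1}G'$ term because $(\log v)'/(\log F)'\to 0$; this uses the key identity $v(x)=\epsilon\bigl(1/F(x)\bigr)$ together with the slow variation of $\epsilon$ in \eqref{eq:epsilon_s}. Hence $|\d H|\sim Cd\,v^{(1-d)/2}G^{d-1}|\d G|$, and the change of variable $t=G(x)$ turns the integral into $Cd\int_0^{1/2}(1-t)^{N-d}\,v^{(1-d)/2}\!\bigl(x(t)\bigr)\,t^{d-1}\,\d t$. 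Since $(1-t)^{N-d}$ concentrates at $t\sim 1/N$, where $1/F\sim N\times(\text{slowly varying})$ and therefore $v(x(t))=\epsilon(1/F)\sim\epsilon(N)$, the factor $v^{(1-d)/2}$ is asymptotically constant and pulls out as $(\epsilon(N))^{-(d-1)/2}$. The remaining Beta integral $\int_0^1(1-t)^{N-d}t^{d-1}\,\d t=B(d,N-d+1)\sim (d-1)!/N^d$ cancels against $\binom{N}{d}\sim N^d/d!$, yielding \eqref{eq:exp_e_f_n}.

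I expect the main obstacle to be the rigorous treatment of the slowly varying functions rather than the endpoint asymptotics themselves. Three points need care: (i) showing that the pointwise $\sim$ relations survive integration, i.e.\ that the Watson approximations for $K$, $H$, and the final Beta integral can be justified by Karamata-type uniform convergence plus a dominating function; (ii) the self-referential identification $v(x(t))\sim\epsilon(N)$ uniformly over the concentration window $t\sim c/N$, which relies on $v=\epsilon(1/F)$, on $1/F\sim N\,\ell(N)$ with $\ell$ slowly varying, and on $\epsilon(N\ell(N))\sim\epsilon(N)$; and (iii) verifying that the regularity conditions (2.15) of \cite{carnal1970konvexe} indeed make $\epsilon(s)$ slowly varying and legitimise discarding the $v'$-term. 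Once these uniformity statements are in place, the arithmetic of constants is routine and reduces to the Gamma and Beta evaluations indicated above.
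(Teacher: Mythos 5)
Your proposal is correct and follows essentially the same route as the paper: both proofs reduce $K$, $H$, and the final $N$-integral to endpoint Laplace/Gamma evaluations, convert $F$ to $G$ via \eqref{eq:G_x_exp}, and freeze the slowly varying factor at $\epsilon(N)$. The only difference is cosmetic — you work in the local coordinate $y=x(1+v(x)\tau)$ while the paper substitutes $\sigma=1/F(y)$ and invokes Dwyer's relation $1-L(s)/L(\sigma)\sim\epsilon(s)\log(\sigma/s)$ together with an Abel--Tauber theorem; these parametrizations are related by $\sigma=se^{\tau}$ and yield identical integrals.
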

 Using \eqref{eq:epsilon_s}, we note that the term
 $(\epsilon(N))^{-\frac{d-1}{2}}$
 is obtained in Dwyer's Theorem 2 \cite{dwyer1991convex}.
 Our improvement on the asymptotic value of $\E[F_N]$ is that we obtain its preceding term only involving $d$.
 When $d=2$, equation \eqref{eq:exp_e_f_n} is consistent with (2.20) of \cite{carnal1970konvexe}.
 For standard $d$-dimensional Gaussian distribution, we have $L(s)=\sqrt{2\log s}$.
 From \eqref{eq:epsilon_s}, $\epsilon(s) = (2\log s)^{-1}$. And we obtain from \eqref{eq:exp_e_f_n}
 that $\E[F_N]\sim \frac{2^d}{\sqrt{d}}(\pi \log N)^{\frac{d-1}{2}}$,
 which is consistent with (1.11) of \cite{raynaud1970enveloppe}.

 If $d\to\infty$ and $N/d^2\to \infty$, we also have the sample asymptotic value
\begin{align}\label{eq:d_infty_exp_E_F_N}
      \E[F_N]\sim \frac{\pi^{\frac{d-1}{2}} 2^{\frac{d+1}{2}}}{\sqrt{d}} \epsilon(N)^{\frac{-d+1}{2}}
\end{align}
\subsection{Distributions with truncated tails}
In this subsection, let $L(x)$ also represent a slowly varying function, and we consider the distribution which satisfies
the following conditions:
\begin{equation}\label{eq:F_truncated}
     F(1-x) \sim x^k L\left(\frac{1}{x} \right)  \text{ as } x \to 0^+, k> 0,
     F(x) = 0 \text{ for } x \geq 1
\end{equation}
In \eqref{eq:F_truncated}, $x \to 0^+$ means that $x\to 0$ from the right hand side of $0$.

As mentioned by Dwyer \cite{dwyer1991convex}, uniform distribution
in the unit ball satisfies \eqref{eq:F_truncated} with
$F(1-x) \sim d\cdot x$.

After simplification of (4.1) in \cite{dwyer1991convex}, we obtain
\begin{align}
    G(1-x) \sim a
    L\left(\frac{1}{x} \right)
    x^{k+\frac{d-1}{2}} \textrm{ as } x \to 0^+ 
    \label{eq:truncated_G_1_x}
\end{align}
where
\begin{align}
a &=\frac{2^{\frac{d-1}{2}} k \Gamma(\frac{d}{2})}
    {(d-1) \sqrt{\pi} \Gamma(\frac{d-1}{2})}
    B\left(k, \frac{d+1}{2}\right) \notag \\
    &= \frac{2^{\frac{d-3}{2}} k \Gamma(\frac{d}{2})\Gamma(k)}
    {\sqrt{\pi} \Gamma\left(k+\frac{d+1}{2}\right)}
    \label{eq:a}
\end{align}
Below, we give the theorem in regards to the asymptotic value of $H(1-x)$ and $\E[F_N]$:
\begin{theorem}\label{thm:truncated_tails}
     For distributions with truncated tails
     defined in \eqref{eq:F_truncated},
     we have
\begin{align}
     H(1-x)  \sim b
     L^d(1/x) x^{d(k+\frac{d}{2}-1)+\frac{1}{2}} 
     \textrm{ as } x \to 0^+ \label{eq:truncated_H_1_x}
\end{align}
where
\begin{align}
     b =  \frac{k^d}{\pi}
     2^{\frac{1}{2} + d(\frac{d}{2}-1)} B^d\left(k, \frac{d}{2}\right)
     B\left( \frac{1}{2},
     d\left(k+\frac{d}{2} -1 \right)+1 \right)
     \label{eq:b}
 \end{align}
When $N\to \infty$ and $d$ is fixed, we have
 \begin{align}\label{eq:efn_truncated_formula}
     \E[F_N] &\sim \frac{b}{d!}a^{-d+\frac{d-1}{2k+d-1}}
     \Gamma 
     \left(d+1-\frac{d-1}{2k+d-1}\right)
     N^{\frac{d-1}{2k+d-1}}
     L(N)
     ^{\frac{d-1}{2k+d-1}}
 \end{align}
 where the parameter $a$ is defined in \eqref{eq:a}.
\end{theorem}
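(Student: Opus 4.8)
The plan is to establish the two displays \eqref{eq:truncated_H_1_x} and \eqref{eq:efn_truncated_formula} in sequence, mirroring the scheme already used for the polynomial and exponential families but carried out near the boundary $x=1$ of the support. The asymptotics of $H(1-x)$ follow from Theorem \ref{thm:H} in two transfer steps: first I obtain the behaviour of the auxiliary function $K(1-x)$ from that of $F(1-x)$, and then I feed $K^d$ into \eqref{eq:H_expression_d_dim} to get $H(1-x)$. Throughout I abbreviate the exponent of $G$ by $\alpha:=k+\tfrac{d-1}{2}$, so that $G(1-x)\sim a\,L(1/x)\,x^{\alpha}$ by \eqref{eq:truncated_G_1_x}; the exponent appearing in $K$ will turn out to be $\alpha-\tfrac12$ and the one in $H$ will be $p+\tfrac12$ with $p:=d(\alpha-\tfrac12)=d(k+\tfrac d2-1)$.

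For the first step I localise \eqref{eq:K_x} near $x=1$. Writing $x=1-\epsilon$ and $y=1-s$ with $s\in[0,\epsilon]$ (the range forced by $F(y)=0$ for $y\ge1$), I linearise $x/y\approx 1-(\epsilon-s)$, so that $\lambda_d(x/y)=(1-(x/y)^2)^{(d-2)/2}\approx(2(\epsilon-s))^{(d-2)/2}$, while $|\d F(y)|\approx k\,s^{k-1}L(1/s)\,\d s$. Pulling the slowly varying factor out as $L(1/\epsilon)$ and rescaling $s=\epsilon u$ reduces the integral to a Beta integral, giving
\begin{equation*}
     K(1-\epsilon)\sim 2^{d/2-1}k\,B\!\left(k,\tfrac d2\right)L(1/\epsilon)\,\epsilon^{\,\alpha-1/2}.
\end{equation*}
For the second step I insert $K^d(1-s)\sim c^d L^d(1/s)s^{p}$, with $c:=2^{d/2-1}kB(k,\tfrac d2)$, into \eqref{eq:H_expression_d_dim}; here $|\d(K^d)|\approx c^d p\,L^d(1/s)s^{p-1}\d s$ and $\arccos(x/y)\approx\sqrt{2(\epsilon-s)}$. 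The same localise--rescale argument produces $\int_0^1 u^{p-1}(1-u)^{1/2}\d u=B(p,\tfrac32)$, and the elementary identity $p\,B(p,\tfrac32)=\tfrac12 B(\tfrac12,p+1)$ collapses the constants exactly into $b$ of \eqref{eq:b}, yielding \eqref{eq:truncated_H_1_x} with exponent $p+\tfrac12=d(k+\tfrac d2-1)+\tfrac12$.

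For \eqref{eq:efn_truncated_formula} I start from \eqref{eq:E_F_N_d_sim} and observe that, since $G(1)=0$, the factor $(1-G(x))^{N-d}$ forces the mass of the integral to concentrate at $x=1$. Substituting $x=1-t$, using $|\d H|\sim b\,(p+\tfrac12)L^d(1/t)\,t^{p-1/2}\d t$ and the Laplace-type approximation $(1-G(1-t))^{N-d}\approx\exp(-N a\,L(1/t)\,t^{\alpha})$, I change variables to $z=N a\,L(1/t)\,t^{\alpha}$. Treating $L$ as locally constant at the scale $t_*=\phi^{-1}(1/N)$ (where $\phi(t):=G(1-t)$), the integral becomes a Gamma integral $\int_0^\infty e^{-z}z^{\,q}\d z=\Gamma(q+1)$ with $q:=(d-1)(\alpha-\tfrac12)/\alpha$. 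A power count then gives the $N$-exponent $d-1-q=\tfrac{d-1}{2\alpha}=\tfrac{d-1}{2k+d-1}$ together with the matching powers of $a$ and $L(N)$. The one nonobvious simplification is that the prefactor $(p+\tfrac12)/\alpha$ coming from differentiating $H$ equals $q+1$, so that $\tfrac{p+1/2}{\alpha}\Gamma(q+1)=\Gamma(q+2)=\Gamma\!\left(d+1-\tfrac{d-1}{2k+d-1}\right)$, which with $\binom{N}{d}\sim N^d/d!$ reproduces \eqref{eq:efn_truncated_formula} exactly.

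I expect the main obstacle to be rigour rather than algebra: justifying that the boundary layer $x\to1$ dominates and that the slowly varying $L$ may be replaced by its value at the working scale. This needs the uniform convergence (Potter) bounds from Karamata theory to control $L(1/t)/L(1/\epsilon)$ (resp. $L(1/t)/L(N)$) away from the peak, together with a dominated-convergence argument that upgrades the pointwise $\sim$ relations for $\lambda_d$, $\arccos$, and $e^{-z}$ into the integrated asymptotics; the Beta- and Gamma-function bookkeeping, by contrast, is routine.
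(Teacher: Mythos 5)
Your proposal is correct and follows essentially the same route as the paper: first the boundary asymptotics of $K(1-x)$ from \eqref{eq:K_x}, then $H(1-x)$ by feeding $K^d$ into Theorem \ref{thm:H} (your identity $p\,B(p,\tfrac{3}{2})=\tfrac{1}{2}B(\tfrac{1}{2},p+1)$ is exactly the paper's integration by parts in disguise), and finally a Laplace-type evaluation of \eqref{eq:E_F_N_d_sim} concentrated near $x=1$. The only cosmetic differences are that you work with the density of $F(1-y)$ directly where the paper integrates by parts, and that you change variables via $z=NaL(1/t)t^{\alpha}$ where the paper substitutes $w=G(1-x)$ and invokes the Abel--Tauber theorem; both yield the same exponents and constants.
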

 The leading polynomial term $N^{\frac{d-1}{2k+d-1}}$ in \eqref{eq:efn_truncated_formula}
 is the same with that in Dwyer's Theorem 3
 but other coefficients are missing in Dwyer's result
 \cite{dwyer1991convex}. Therefore, we say that Dwyer only made the order estimation
 of $\E[F_N]$ while we obtain its asymptotic value.
 When $d=2$, equation \eqref{eq:efn_truncated_formula} reduces to (3.4) of \cite{carnal1970konvexe}.
 For uniform distribution with $k=1, L(N)=d$, we can verify that
 \eqref{eq:efn_truncated_formula} is equivalent with (1.1)
 of \cite{raynaud1970enveloppe}.
 For beta-typed distribution with $k=q+1, L(N)=\frac{2^{q+1}}{(q+1)B(d/2,q+1)}$,
 we can verify that
 \eqref{eq:efn_truncated_formula} is equivalent with the expression of $c_3$
 in (3.3) of \cite{affentranger1991convex}.

 When we allow $d\to \infty$ and $N/d^2 \to \infty$, we obtain
 \begin{equation}\label{eq:truncated_d_inf}
  \E[F_N] \sim 2^{\frac{d+2k}{2}}\pi^{\frac{d-2}{2}} k\Gamma(k)e^k d^{\frac{d-3}{2}-k}
  N^{\frac{d-1}{2k+d-1}} L(N)^{\frac{d-1}{2k+d-1}}
 \end{equation}
\section{Sample complexity in data interpolation}\label{sec:sample_complexity}
In the field of machine learning,
Balestriero et al. \cite{balestriero2021learning}
argues that interpolation almost surely never occurs in high-dimensional spaces.
Therefore, we can not think machine learning algorithms work well because they can interpolate training data
well.
Following the idea of Balestriero, we can further enhance this argument using the probability $p_{N,d}$ introduced previously.
The randomly sampled points in $\R^d$ represent training data
while the interpolation
symbolizes the learning algorithm. Then $p_{N,d}$ represents
the probability of region not learnt by the algorithm.
When we expect $p_{N,d} \to 0$ for large $d$,
in this section
we find that we need
at least exponentially large $N$,
which is impractical for real-world dataset.

The sufficient condition for $p_{N,d} \to 0$ is summarized in Table \ref{tab:cond},
which are obtained
from the estimation inequality \eqref{eq:p_N_d_bound},
combined with the asymptotic expressions for three distribution families in \eqref{eq:poly_E_F_N_d_infty},
\eqref{eq:p_N_d_bound}, and
\eqref{eq:truncated_d_inf}.

\begin{table}[!ht]
     \centering
     \begin{tabular}{cc}
         \hline
         distribution family &  Condition for $p_{N,d} \to 0$ \\
         \hline
        Algebraic tails & $N \gg \frac{c^d}{d^{3/2}}, c=\frac{\sqrt{\pi}k\Gamma(k/2)}{\Gamma(\frac{k+1}{2})}>1$ \\
        \hline
        Exponential tails 
        & $ N\cdot \epsilon(N)^{(d-1)/2} \gg \frac{(2\pi)^{d/2}}{d^{3/2}}$ \\
        \hline
        Truncated tails 
        & $\frac{N^{2k/d}}{L(N)^{\frac{d-1}{2k+d-1}}}
        \gg (2\pi)^{d/2}d^{(d-5)/2}$\\
        \hline
     \end{tabular}
     \caption{Condition for $p_{N,d}\to 0$ when $d\to \infty$ and $N$ is a function of $d$}
     \label{tab:cond}
 \end{table}

For algebraic tails, we need $N\gg \frac{c^d}{d^{3/2}}$,
to guarantee $p_{N,d}\to 0$.  
For the special case of Gaussian distribution,
from $\epsilon(s)=(2\log s)^{-1}$ and the general formula for exponential tails in Table \ref{tab:cond},
we can estimate that
$N(d)$ grows faster than any $c^d$ (algebraic tails) but slower than $\exp(d^2)$.
For uniform distribution, from $L(N)=d$ and the general formula for truncated tails,
$N\gg \exp(c' d^2 \log d)$ for some constant $c'$.
Generally speaking, the sample complexity of $N$ is the smallest for algebraic tails and the largest
for truncated tails. In other words, the faster $F(x)$ decays, the larger $N$ becomes.

In conclusion, this section provides further theoretical justification for the argument that interpolation almost surely never occurs in high-dimensional spaces.
This is just one application area for the asymptotic result of $\E[F_N]$ we obtained in previous sections.
Other applications can be found in domains of complexity analysis of algorithms related with convex hulls
\cite{seidel1997}.





\appendix

\section{Proof of Theorem \ref{thm:H}}\label{app:th}
\begin{lemma}\label{lem:F_0}
     Let $F_0$ represent the
probability that the distance from the origin to the straight line
$X_1X_2$ is larger than $x$, where $X_1, X_2$ follow the distribution with $F(x)=P(|X|\geq x)$.
Then 
\begin{equation}\label{eq:F_0_expression}
     F_0(x)=\frac{2\Gamma(\frac{d}{2})}
     {\sqrt{\pi}\Gamma(\frac{d-1}{2})}
     \int_x^{\infty} |\d F(y)|
     \int_x^{y} |\d F(z)| \int_{a_2/z}^{a_1 /z} (1-u^2)^{\frac{d-3}{2}} \d u
 \end{equation}
where
\begin{align}
     a_1 & =\frac{x^2}{y}+\sqrt{z^2-x^2}\sqrt{1-\frac{x^2}{y^2}} > 0
     \label{eq:a_1} \\
a_2 & =\frac{x^2}{y}-\sqrt{z^2-x^2}\sqrt{1-\frac{x^2}{y^2}} < 0
\label{eq:a_2}
\end{align}
\end{lemma}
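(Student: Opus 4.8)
The plan is to condition on the two radii $r_1=|X_1|$ and $r_2=|X_2|$ and exploit spherical symmetry, reducing the event $\{\dist>x\}$ to a condition on the angle $\theta$ between $X_1$ and $X_2$. First I would record the elementary distance-to-line identity: computing the area of the triangle $0X_1X_2$ in two ways (via $\tfrac12 r_1 r_2\sin\theta$ and via $\tfrac12|X_1-X_2|\cdot\dist$) gives
\[
\dist^2=\frac{r_1^2 r_2^2\sin^2\theta}{r_1^2+r_2^2-2r_1 r_2\cos\theta}.
\]
Since $\dist\le\min(r_1,r_2)$, the event $\{\dist>x\}$ already forces $r_1>x$ and $r_2>x$, which accounts for the lower limits $x$ appearing in the two radial integrals of \eqref{eq:F_0_expression}.

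Next, writing $c=\cos\theta$ and $r_1=z,\ r_2=y$, the inequality $\dist>x$ is equivalent to the quadratic inequality $z^2y^2c^2-2x^2zy\,c-(z^2y^2-x^2z^2-x^2y^2)<0$. Solving it, and using the factorization $x^4+z^2y^2-x^2z^2-x^2y^2=(z^2-x^2)(y^2-x^2)$, I would obtain the two roots
\[
c_\pm=\frac{x^2\pm\sqrt{(z^2-x^2)(y^2-x^2)}}{zy},
\]
so that $\{\dist>x\}$ is exactly $c_-<\cos\theta<c_+$. A direct simplification identifies $c_+=a_1/z$ and $c_-=a_2/z$ with $a_1,a_2$ as in \eqref{eq:a_1}--\eqref{eq:a_2} (note that $a_1/z$ and $a_2/z$ are in fact symmetric in $y$ and $z$), and the inequality $(z-y)^2\ge0$ shows $-1\le c_-<c_+\le1$, so the limits are genuine cosine values.

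With the angular window fixed, I would integrate out the direction. Fixing $X_1$ and letting $X_2$ be uniform on the sphere of radius $y$, the cosine $u=\cos\theta$ to the fixed direction has density $\tfrac{\Gamma(d/2)}{\sqrt{\pi}\,\Gamma((d-1)/2)}(1-u^2)^{(d-3)/2}$ on $[-1,1]$, the same normalization that appears in $\kappa$ in \eqref{eq:kappa_r}. Hence the conditional probability satisfies $P(\dist>x\mid r_1=z,\,r_2=y)=\tfrac{\Gamma(d/2)}{\sqrt{\pi}\,\Gamma((d-1)/2)}\int_{a_2/z}^{a_1/z}(1-u^2)^{(d-3)/2}\,\mathrm{d}u$. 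Finally, since $X_1,X_2$ are i.i.d.\ with radial law $|\mathrm{d}F|$ and this conditional probability is symmetric in $(y,z)$, I would integrate over the two radii and restrict to $z<y$ at the cost of a factor $2$, which combines with the normalization above to produce the constant $\tfrac{2\Gamma(d/2)}{\sqrt{\pi}\,\Gamma((d-1)/2)}$ and yields exactly \eqref{eq:F_0_expression}.

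The main obstacle is the middle step: converting the purely geometric event $\{\dist>x\}$ into the sharp angular window $a_2/z<\cos\theta<a_1/z$. This rests on the quadratic analysis together with the factorization $(z^2-x^2)(y^2-x^2)$ and the verification that the roots lie in $[-1,1]$; once the window is secured, the angular density and the i.i.d.\ radial integration are routine.
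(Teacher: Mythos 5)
Your proof is correct and follows essentially the same route as the paper: condition on the two radii, reduce the event $\{\dist>x\}$ to the angular window $a_2/z<\cos\theta<a_1/z$, and integrate against the projection density $\frac{\Gamma(d/2)}{\sqrt{\pi}\,\Gamma(\frac{d-1}{2})}(1-u^2)^{(d-3)/2}$, which is exactly the paper's surface-area band between the two planes at heights $a_1/z$ and $a_2/z$ on the sphere of radius $z$. The only real difference is how the window endpoints are obtained --- you solve a quadratic in $\cos\theta$ coming from the explicit distance formula, whereas the paper reads them off the planar figure via $a_1/z=\cos(\theta_2-\theta_1)$ and $a_2/z=\cos(\theta_2+\theta_1)$ --- and your route has the small merit of making the check $-1\le a_2/z<a_1/z\le 1$ explicit.
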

Lemma \ref{lem:F_0} gives the integration formula for $P(d_{12}\geq x)$ in $\R^d$.
The geometric meaning of $a_1, |a_2|$ is illustrated in Figure
\ref{fig:a1a2}. That is, if we assume
\begin{equation}\label{eq:theta_1_theta_2}
     \cos\theta_1=\frac{x}{z}
     \textrm{ and } \cos\theta_2=\frac{x}{y} 
\end{equation}
then
\begin{equation}\label{eq:a_1_a_2}
     \frac{a_1}{z} = \cos(\theta_2 - \theta_1),
     \quad
     \frac{a_2}{z} = \cos(\theta_2+\theta_1)           
\end{equation}

\begin{figure}[!ht]
     \centering
     \includegraphics[width=0.8\textwidth]{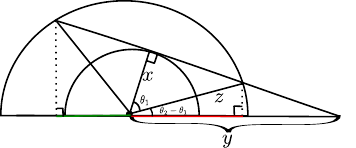}
     \caption{The length of red line represents $a_1$ while the green line corresponds to $|a_2|$.}
     \label{fig:a1a2}
\end{figure}
When $d=2$, using $\int \frac{\d x}{\sqrt{1-x^2}} = -\arccos x + C$,
we obtain
$$
F_0(x)=\frac{4}{\pi} \int_x^{\infty} \arccos\frac{x}{z}|\d F(y)|
\int_x^{y} |\d F(z)|\d z
$$
which is the same with \eqref{eq:H_expression_2_dim}.
\begin{proof}[Proof of Lemma \ref{lem:F_0}]
For $d\geq 2$ and $x<z<y$, we follow the geometric approach
which is adopted to derive
$H(x)$ in \cite{carnal1970konvexe}.
Then we only need to compute the ratio of surface area
on the sphere with radius $z$. This is the area between
two planes at distance $\frac{a_1}{z}$ and $\frac{a_2}{z}$
respectively on the unit sphere. From (1.1) of \cite{dwyer1991convex},
this ratio equals $\kappa(\frac{a_2}{z}) - \kappa(\frac{a_1}{z})$
where $\kappa$ is defined in \eqref{eq:kappa_r}.

\end{proof}
To show 
$K(x) = \int_x^{+\infty} \lambda_d(\frac{x}{y})|\d F(y)|$
in \eqref{eq:K_x},
we only need to connect $\lambda_d(r)$ with its geometric meaning,
which is given in the following lemma:
\begin{lemma}
     The ratio of surface area on a unit $d$-sphere,
     satisfying $x_1^2+x_2^2\geq r$ is $\lambda_d(r)$,
     defined in \eqref{eq:lambda_r}.
\end{lemma}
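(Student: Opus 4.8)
The plan is to compute directly the fraction of the uniform surface measure on the unit sphere $S^{d-1}\subset\R^d$ that is carried by the region where the projection onto the first two coordinates is long. Note first that, in order for the answer to match $\lambda_d(r)=(1-r^2)^{(d-2)/2}$, the stated condition should be read as $\sqrt{x_1^2+x_2^2}\ge r$, equivalently $x_1^2+x_2^2\ge r^2$; this is precisely the conditional event whose probability (given $|X|=y$ and rescaling by $y$) appears in \eqref{eq:K_x}. I would therefore prove that the uniform surface measure of $\{\sqrt{x_1^2+x_2^2}\ge r\}$ equals $\lambda_d(r)$.

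First I would split the coordinates as $(x_1,x_2)\in\R^2$ and $(x_3,\dots,x_d)\in\R^{d-2}$ and foliate the sphere by the value $\rho=\sqrt{x_1^2+x_2^2}\in[0,1]$. For fixed $\rho$ the level set is the product of a circle of radius $\rho$ in the first plane with a sphere $S^{d-3}$ of radius $\sqrt{1-\rho^2}$ in the remaining space, parametrized by $(\rho,\omega_1,\omega_2)\mapsto(\rho\,\omega_1,\sqrt{1-\rho^2}\,\omega_2)$ with $\omega_1\in S^1$ and $\omega_2\in S^{d-3}$.

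Second I would compute the induced surface-area element. Differentiating in $\rho$ gives the tangent vector $(\omega_1,-\tfrac{\rho}{\sqrt{1-\rho^2}}\omega_2)$, of length $(1-\rho^2)^{-1/2}$; the circle of radius $\rho$ contributes the factor $\rho$; and the inner sphere of radius $\sqrt{1-\rho^2}$ contributes $(1-\rho^2)^{(d-3)/2}$. A quick inner-product check shows these three families of tangent directions are mutually orthogonal, so the area element factors as their product. Integrating out $\omega_1$ and $\omega_2$ then shows that the measure of the slice at radius $\rho$ is proportional to $\rho\,(1-\rho^2)^{(d-4)/2}\,\d\rho$. This decomposition—pinning down the three Jacobian factors and verifying orthogonality—is the only delicate point; everything that follows is a one-line substitution.

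Finally I would normalize and integrate the tail. With $s=\rho^2$ the total mass is $\int_0^1\rho(1-\rho^2)^{(d-4)/2}\,\d\rho=\tfrac1{d-2}$, so the normalized density of $\rho$ is $(d-2)\,\rho(1-\rho^2)^{(d-4)/2}$, and the same substitution gives
\[
\int_r^1 (d-2)\,\rho\,(1-\rho^2)^{(d-4)/2}\,\d\rho=(1-r^2)^{(d-2)/2}=\lambda_d(r),
\]
as claimed. An even shorter alternative, which I would keep in reserve, is to realize a uniform point as $X/|X|$ with $X\sim N(0,I_d)$, observe that $x_1^2+x_2^2=(X_1^2+X_2^2)/|X|^2$ follows a $\mathrm{Beta}(1,\tfrac{d-2}{2})$ law, and read off the tail $(1-r^2)^{(d-2)/2}$ directly from its distribution function.
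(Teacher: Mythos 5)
Your proof is correct, and your reading of the hypothesis as $\sqrt{x_1^2+x_2^2}\ge r$ (equivalently $x_1^2+x_2^2\ge r^2$) is the intended one --- it is exactly the event appearing in \eqref{eq:K_x} and the one the paper's own computation integrates over. Both arguments start by passing to polar coordinates in the $(x_1,x_2)$-plane, but the executions differ. The paper represents each hemisphere as a graph over the first $d-1$ coordinates, carries the Jacobian $(1-x_1^2-\dots-x_{d-1}^2)^{-1/2}$ through an iterated integral over the radius in the $(x_1,x_2)$-plane and the radius of the remaining $(x_3,\dots,x_{d-1})$ block, swaps the order of integration, and finishes with a Beta-function identity and the explicit constant $\kappa_d=2\pi^{d/2}/\Gamma(d/2)$. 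You instead foliate the sphere intrinsically as $[0,1]\times S^1\times S^{d-3}$, verify that the three tangent directions are orthogonal, and extract the marginal density $(d-2)\,\rho\,(1-\rho^2)^{(d-4)/2}$ of $\rho=\sqrt{x_1^2+x_2^2}$ in one step, after which the tail integral is immediate and no normalizing constant ever needs to be evaluated. Your route is shorter and cleaner; the paper's has the minor advantage of reusing the projection machinery already set up for $\kappa(r)$ in \eqref{eq:kappa_r}. Your Gaussian realization, with $(X_1^2+X_2^2)/|X|^2\sim\mathrm{Beta}\bigl(1,\tfrac{d-2}{2}\bigr)$, is the slickest of all. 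The only degenerate cases worth a remark (for either argument) are $d=2$, where the claim is trivially $\lambda_2(r)=1$, and $d=3$, where $S^{d-3}=S^0$ is a two-point set; both are covered by the usual conventions.
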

\begin{proof}
     Following \cite{dwyer1991convex}, we use $\kappa_d = 2\pi^{d/2}/\Gamma(d/2)$
     to represent the surface area of the unit $d$-sphere. Then
     \begin{align*}
          \lambda_d(r) &=\frac{1}{\kappa_d} 
          \cdot 2\int_{\substack{x_1^2+x_2^2\geq r^2\\
          x_1^2+\dots+x_{d-1}^2\leq 1 }} 
          \frac{1}{\sqrt{1-x_1^2-\dots -x_{d-1}^2}} \d x_1 \dots \d x_{d-1}\\
      &= \frac{4\pi}{\kappa_d} \int_r^1 x\d x \int_{x_3^2+\dots + x_{d-1}^2 \leq 1-x^2} \frac{\d x_3\dots \d x_{d-1}}
      {\sqrt{1-x^2-x_3^2-\dots -x_{d-1}^2}} \\
      &=\frac{4\pi \kappa_{d-3}}{\kappa_d} \int_r^1 x\d x \int_0^{\sqrt{1-x^2}} \frac{y^{d-4}\d y}{\sqrt{1-x^2-y^2}}\\
      &=\frac{4\pi \kappa_{d-3}}{\kappa_d} \int_0^{\sqrt{1-r^2}} y^{d-4}\d y \int_r^{\sqrt{1-y^2}} \frac{x\d x}{\sqrt{1-x^2-y^2}}\\
      &=\frac{4\pi \kappa_{d-3}}{\kappa_d} \int_0^{\sqrt{1-r^2}} y^{d-4}\sqrt{1-r^2-y^2} \d y\\
      &=\frac{4\pi \kappa_{d-3}}{\kappa_d}  \frac{B(\frac{3}{2 }, \frac{d-3}{2})}{2}(1-r^2)^{(d-2)/2}\\
      &=  (1-r^2)^{(d-2)/2}
      \end{align*}
\end{proof}

\begin{lemma}\label{lem:cn_integration}
For $0<c<1$ and $n$ is a non-negative positive integer, we have
\begin{equation}
    \int_0^{c}
    [\frac{1}{(1-t)^{n+1}}+\frac{1}{(1+t)^{n+1}}]
    (c^2- t^2)^{(n-1)/2}\d t
    =B(\frac{n+1}{2}, \frac{1}{2})
    \frac{c^n}{(1-c^2)^{(n+1)/2}}
    \end{equation}\label{eq:c_int_eq}
\end{lemma}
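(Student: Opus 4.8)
The plan is to reduce the given integral to a single trigonometric integral over $[0,\pi]$ and then evaluate that integral by a parameter-differentiation argument that collapses to an exact derivative.

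First I would substitute $t = c\cos\psi$, so that $\d t = -c\sin\psi\,\d\psi$ and $c^2-t^2 = c^2\sin^2\psi$, sending $t\in(0,c)$ to $\psi\in(0,\pi/2)$. This turns the left-hand side into $c^n\int_0^{\pi/2}\bigl[(1-c\cos\psi)^{-(n+1)} + (1+c\cos\psi)^{-(n+1)}\bigr]\sin^n\psi\,\d\psi$. The summand carrying $(1+c\cos\psi)^{-(n+1)}$ is then folded onto $[\pi/2,\pi]$ by the reflection $\psi\mapsto\pi-\psi$ (under which $\cos\psi\mapsto-\cos\psi$ while $\sin\psi$ is fixed), so the two terms merge into one integral over the full interval. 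This yields the compact form
\[
\text{LHS} = c^n\,J, \qquad J := \int_0^\pi \frac{\sin^n\psi}{(1-c\cos\psi)^{n+1}}\,\d\psi,
\]
and it remains to prove $J = B\!\left(\tfrac{n+1}{2},\tfrac12\right)(1-c^2)^{-(n+1)/2}$.

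For this I would set $\Phi(c) := (1-c^2)^{(n+1)/2}J$ and show it is constant in $c$. Differentiating under the integral sign and combining the two resulting terms over a common denominator, the integrand simplifies, up to the nonzero prefactor $(n+1)(1-c^2)^{(n-1)/2}$, to $\frac{\sin^n\psi\,(\cos\psi-c)}{(1-c\cos\psi)^{n+2}}$. The crucial observation is that this equals $\frac{1}{n+1}\frac{\d}{\d\psi}\!\left[\frac{\sin^{n+1}\psi}{(1-c\cos\psi)^{n+1}}\right]$, an exact $\psi$-derivative; since $\sin^{n+1}\psi$ vanishes at both endpoints $\psi=0,\pi$ for $n\ge 0$, the integral is zero and hence $\Phi'(c)\equiv 0$. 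Evaluating at $c=0$ then gives $\Phi(0) = \int_0^\pi \sin^n\psi\,\d\psi = B\!\left(\tfrac{n+1}{2},\tfrac12\right)$, the standard Wallis/Beta value, which pins down the constant and finishes the proof.

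The main obstacle is guessing the antiderivative $\frac{\sin^{n+1}\psi}{(1-c\cos\psi)^{n+1}}$ that makes $\Phi'(c)$ collapse to zero; everything else is routine substitution and bookkeeping. As a sanity check I would verify the base cases directly: $n=0$ gives $J=\pi/\sqrt{1-c^2}$ and $n=1$ gives $J=2/(1-c^2)$, matching $B(\tfrac12,\tfrac12)=\pi$ and $B(1,\tfrac12)=2$ respectively.
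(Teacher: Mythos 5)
Your proof is correct, and it takes a genuinely different route from the paper's. The paper verifies the cases $n=0$ and $n=1$ by hand, then for $n\ge 2$ combines integration by parts with the splitting $(c^2-t^2)^{(n-1)/2}=[(c^2-1)+(1-t^2)](c^2-t^2)^{(n-3)/2}$ to produce a first-order linear ODE in $c$ linking $h(n,c)$ to $h(n-2,c)$, which it solves with the integrating factor $(1-c^2)^{(n+1)/2}$ under a two-step induction on $n$. You instead use the substitution $t=c\cos\psi$ and the reflection $\psi\mapsto\pi-\psi$ to merge both terms into the single integral $c^n\int_0^\pi \sin^n\psi\,(1-c\cos\psi)^{-(n+1)}\,\d\psi$, and then show that $(1-c^2)^{(n+1)/2}$ times this integral is constant in $c$ because its $c$-derivative reduces to $\int_0^\pi \frac{\d}{\d\psi}\bigl[\tfrac{1}{n+1}\sin^{n+1}\psi\,(1-c\cos\psi)^{-(n+1)}\bigr]\d\psi=0$; I checked the algebra of that exact-derivative identity and it is right, and the evaluation $\Phi(0)=\int_0^\pi\sin^n\psi\,\d\psi=B(\tfrac{n+1}{2},\tfrac12)$ is the standard Wallis value. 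Your argument buys uniformity: no induction, no separate base cases (your $n=0,1$ checks are only sanity checks), and no ODE to solve; the only point you should state explicitly is the routine justification for differentiating under the integral sign, which holds since the integrand is smooth and uniformly bounded on $[0,\pi]\times[0,1-\delta]$ for $0<c\le 1-\delta$. The paper's recursion, by contrast, stays entirely within rational-algebraic manipulations of the original variable $t$ but at the cost of a heavier bookkeeping burden.
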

\begin{proof}[Proof of Lemma \ref{lem:cn_integration}]
     When $n=0$, the left hand side equals
     \begin{align*}
          \int_0^c \frac{2}{(1-t^2)\sqrt{c^2-t^2}} \d t
          = \int_0^{\sqrt{c}} \frac{1}{(1-t) \sqrt{t} \sqrt{c^2-t} }\d t
     \end{align*}     
     We make change of variables $x=\sqrt{t/(c^2-t)}$ and obtain
     the value $\frac{\pi}{(1-c^2)^{1/2}}$, which equals
     the right hand side.
     When $n=1$, we can use integration directly
     to show that \eqref{eq:c_int_eq} holds.

     Let
\begin{equation}\label{eq:f_n_c_def}
h(n,c)=   \int_0^{c}
    [\frac{1}{(1-t)^{n+1}}+\frac{1}{(1+t)^{n+1}}]
    (c^2- t^2)^{(n-1)/2}\d t
\end{equation}
When $n\geq 2$, by integration by parts 
we have 
\begin{equation}\label{eq:tmp_n_u_d_f_n}
    \frac{n}{n-1}h(n,c)
    =\int_0^{c}
    \left[\frac{1}{(1-t)^{n}}
    -\frac{1}{(1+t)^{n}}
    \right]
    t(c^2- t^2)^{(n-3)/2}
    \d t
\end{equation}
Let $t=\frac{(1+t)-(1-t)}{2}$ in the right hand side
of \eqref{eq:tmp_n_u_d_f_n}, then
\begin{equation}
    2\frac{n}{n-1}h(n,c)
=    -h(n-2,c)  
+ \int_0^{c}
\left[\frac{1+t}{(1-t)^{n}}
+\frac{1-t}{(1+t)^{n}}
\right]
(c^2- t^2)^{(n-3)/2}
\d t
\end{equation}
On the other hand,
From \eqref{eq:f_n_c_def},
$(c^2-t^2)^{(n-1)/2}
=[(c^2-1)+(1-t^2)](c^2-t^2)^{(n-3)/2}$
\begin{equation}
    h(n, c) = \frac{c^2-1}{2c}\frac{2}{n-1}\frac{\partial h(n,c)}{\partial c}
    +  \int_0^{c}
    \left[\frac{1+t}{(1-t)^{n}}
    +\frac{1-t}{(1+t)^{n}}
    \right]
    (c^2- t^2)^{(n-3)/2}
    \d t
\end{equation}
Therefore,
\begin{equation}\label{eq:sim_1tc}
    h(n,c)=\frac{c^2-1}{2c}\frac{2}{n-1}\frac{\partial h(n,c)}{\partial c}
    + [2\frac{n}{n-1} h(n,c) + h(n-2, c)]
\end{equation}
then we can use induction and solve this
first order linear ODE about $h(n,c)$.
The initial condition is $h(n,0)=0$ and
suppose
$h(n-2,c)=B(\frac{n-1}{2}, \frac{1}{2})
\frac{c^{n-2}}{(1-c^2)^{(n-1)/2}}$.
Then \eqref{eq:sim_1tc} is simplified as
\begin{equation}
    \frac{\partial h(n,c)}{\partial c}
    - \frac{(n+1)c}{1-c^2} h(n,c)
    = B(\frac{n-1}{2}, \frac{1}{2})\frac{(n-1)c^{n-1}}{(1-c^2)^{(n+1)/2}}
\end{equation}
Using the integration factor $I(c)=(1-c^2)^{(n+1)/2}$, we obtain
\begin{equation}\label{eq:f_n_c_expression}
    h(n,c)= B(\frac{n+1}{2}, \frac{1}{2})
    \frac{c^n}{(1-c^2)^{(n+1)/2}}
\end{equation}

\end{proof}

Having defined $K(x), F_0(x)$, we find the following relationship holds:
\begin{lemma}\label{lem:K_F_relationship}
\begin{equation}\label{eq:F_0_integration}
     \int_u^{+\infty}
     (1-\frac{u^2}{x^2})^{\frac{d-3}{2}} |\d F_0(x)| = K^2(u)
\end{equation}
\end{lemma}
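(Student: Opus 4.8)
The plan is to reduce the claimed identity to a one-dimensional integral of exactly the form evaluated in Lemma \ref{lem:cn_integration}. Throughout, write $C=\frac{2\Gamma(d/2)}{\sqrt{\pi}\,\Gamma((d-1)/2)}$ for the constant in Lemma \ref{lem:F_0} and set $I(x,y,z)=\int_{a_2/z}^{a_1/z}(1-s^2)^{(d-3)/2}\,\mathrm{d}s$, so that by \eqref{eq:F_0_expression} we have $F_0(x)=C\int_x^{\infty}|\mathrm{d}F(y)|\int_x^{y}|\mathrm{d}F(z)|\,I(x,y,z)$. First I would compute $\mathrm{d}F_0(x)$ by differentiating this double integral in $x$. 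The variable $x$ enters both through the two lower limits $z=x,\ y=x$ and through $I$ itself. The key observation is that the boundary terms vanish: by \eqref{eq:theta_1_theta_2} and \eqref{eq:a_1_a_2}, when $z=x$ one has $\theta_1=0$, hence $a_1/z=a_2/z=\cos\theta_2$ and $I=0$, while at $y=x$ the inner integral is empty. Thus $\frac{\mathrm{d}F_0}{\mathrm{d}x}=C\int_x^{\infty}|\mathrm{d}F(y)|\int_x^{y}|\mathrm{d}F(z)|\,\frac{\partial I}{\partial x}$, with no extra contributions.

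Next I would substitute this into the left-hand side of \eqref{eq:F_0_integration} (using $|\mathrm{d}F_0(x)|=-\frac{\mathrm{d}F_0}{\mathrm{d}x}\,\mathrm{d}x$, since $F_0$ is decreasing) and interchange the order of integration. The region $\{u<x,\ x<z<y\}$ rewrites as $\{u<z<y,\ u<x<z\}$, so the identity reduces to the pointwise kernel statement
\[
   -C\int_u^{z}\Bigl(1-\tfrac{u^2}{x^2}\Bigr)^{(d-3)/2}\,\frac{\partial I}{\partial x}(x,y,z)\,\mathrm{d}x
   \;=\;2\Bigl(1-\tfrac{u^2}{y^2}\Bigr)^{(d-2)/2}\Bigl(1-\tfrac{u^2}{z^2}\Bigr)^{(d-2)/2}
\]
for each fixed $u<z<y$. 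Here the factor $2$ and the product on the right come from expanding $K^2(u)$ symmetrically via \eqref{eq:K_x}--\eqref{eq:lambda_r}: since the diagonal is null, $K^2(u)=2\int_u^{\infty}|\mathrm{d}F(y)|\int_u^{y}|\mathrm{d}F(z)|\,(1-u^2/y^2)^{(d-2)/2}(1-u^2/z^2)^{(d-2)/2}$.

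Finally I would establish this kernel identity. Using \eqref{eq:a_1_a_2}, write $a_1/z=\cos(\theta_2-\theta_1)$, $a_2/z=\cos(\theta_2+\theta_1)$ with $\cos\theta_1=x/z$, $\cos\theta_2=x/y$, and differentiate $I$ by the Leibniz rule; this yields the two endpoint terms $\sin^{d-2}(\theta_2\pm\theta_1)$ paired against $\theta_1'=-1/\sqrt{z^2-x^2}$ and $\theta_2'=-1/\sqrt{y^2-x^2}$. After a trigonometric substitution that converts $(1-u^2/x^2)^{(d-3)/2}$ into a factor $(c^2-t^2)^{(d-3)/2}$, the two endpoint contributions should merge into the symmetric pair $\frac{1}{(1-t)^{d-1}}+\frac{1}{(1+t)^{d-1}}$, so that the $x$-integral is precisely the case $n=d-2$ of Lemma \ref{lem:cn_integration} and evaluates to a multiple of $B(\tfrac{d-1}{2},\tfrac12)$. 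The constant then closes automatically, because $C\,B(\tfrac{d-1}{2},\tfrac12)=2$, and the Jacobian prefactor of the substitution combines with $\frac{c^{d-2}}{(1-c^2)^{(d-1)/2}}$ to reproduce the product on the right.

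I expect the main obstacle to be exactly this last step: pinning down the substitution that simultaneously (i) matches the exponent $(d-3)/2$, (ii) fuses the $\theta_2\pm\theta_1$ endpoint terms into the symmetric $\frac{1}{(1\mp t)^{d-1}}$ pair of Lemma \ref{lem:cn_integration}, and (iii) delivers the Jacobian that turns the single Beta-function output into the factored product $(1-u^2/y^2)^{(d-2)/2}(1-u^2/z^2)^{(d-2)/2}$. The exact cancellation $C\,B(\tfrac{d-1}{2},\tfrac12)=2$ is a strong consistency check that this is the intended route and that Lemma \ref{lem:cn_integration} is precisely the tool prepared for it.
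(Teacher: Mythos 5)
Your proposal follows essentially the same route as the paper's proof: differentiate $F_0$ via the chain rule applied to $a_1/z=\cos(\theta_2-\theta_1)$ and $a_2/z=\cos(\theta_2+\theta_1)$, swap the order of integration to reduce everything to the pointwise kernel identity \eqref{eq:ref_prove_integration}, and evaluate the resulting $x$-integral by Lemma \ref{lem:cn_integration} with $n=d-2$, with the constants closing via $C\,B(\tfrac{d-1}{2},\tfrac12)=2$ exactly as you predict. The substitution you flag as the remaining obstacle is, in the paper, an algebraic (not trigonometric) two-step change of variables --- a binomial expansion of $\sin^{d-2}(\theta_2\pm\theta_1)$, then $x\mapsto x^2-u^2$ followed by $t=\sqrt{(a-x)/(b-x)}$ with $a=z^2-u^2$, $b=y^2-u^2$ --- which produces precisely the symmetric pair $\frac{1}{(1-t)^{d-1}}+\frac{1}{(1+t)^{d-1}}$ against $(c^2-t^2)^{(d-3)/2}$ with $c=\sqrt{a/b}$.
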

\begin{proof}[Proof of Lemma \ref{lem:K_F_relationship}]
     By the chain rule, from \eqref{eq:a_1_a_2} we obtain
\begin{align*}
    \frac{\d}{\d x}\frac{a_1}{z} &
    = \sin(\theta_2 - \theta_1)
    \left(\frac{-1}{z\sin\theta_1}
    +\frac{1}{y\sin \theta_2}\right)\\
    \frac{\d}{\d x}\frac{a_2}{z} &
    = \sin(\theta_2 + \theta_1)
    \left(\frac{1}{z\sin\theta_1}
    +\frac{1}{y\sin \theta_2}\right)
\end{align*}
where $\theta_1, \theta_2$ are defined in \eqref{eq:theta_1_theta_2}.
Then from \eqref{eq:F_0_expression}
\begin{align*}
    \frac{\d F_0(x)}{\d x}  =\frac{2\Gamma(\frac{d}{2})}
    {\sqrt{\pi}\Gamma(\frac{d-1}{2})}
    \int_x^{+\infty} |\d F(y)| \int_x^y |f(x,y,z)| |\d F(z)| 
\end{align*}
where
\begin{align*}
    f(x,y,z) = \sin^{d-2} (\theta_2 - \theta_1)
    \left(\frac{-1}{z\sin\theta_1}
    +\frac{1}{y\sin \theta_2}\right)
    - \sin^{d-2}(\theta_1 + \theta_2)
    \left(\frac{1}{z\sin\theta_1}
    +\frac{1}{y\sin \theta_2}\right)
\end{align*}
Comparing $\d F_0(x)$
with the expression of $K(x)$ in
\eqref{eq:K_x},
we only need to prove
\begin{equation}\label{eq:ref_prove_integration}    
    \frac{2\Gamma(\frac{d}{2})}
    {\sqrt{\pi}\Gamma(\frac{d-1}{2})}
    \int_u^z (1-\frac{u^2}{x^2})^{\frac{d-3}{2}}
    |f(x,y,z)|\d x =
    2(1-\frac{u^2}{y^2})^{\frac{d-2}{2}}
    (1-\frac{u^2}{z^2})^{\frac{d-2}{2}}
\end{equation}
We can expand $f(x,y,z)$ as follows:
\begin{align*}
-f(x,y,z)&=\frac{\sin^{d-2}(\theta_2+\theta_1)
+ \sin^{d-2}(\theta_2 - \theta_1)}{z\sin\theta_1}
+\frac{\sin^{d-2}(\theta_2+\theta_1)
- \sin^{d-2}(\theta_2 - \theta_1)}{y\sin\theta_2} \\
&=\frac{2}{z\sin\theta_1}\sum_{k \textrm{ is even}}
\binom{d-2}{k} (\sin\theta_2\cos\theta_1)^{d-2-k}
(\cos\theta_2 \sin\theta_1)^k \\
&+\frac{2}{y\sin\theta_2} \sum_{k \textrm{ is odd}}
\binom{d-2}{k} (\sin\theta_2\cos\theta_1)^{d-2-k}
(\cos\theta_2 \sin\theta_1)^k
\end{align*}
Therefore,
\begin{align*}
|f(x,y,z)|
&= \frac{2x^{d-2}}{y^{d-2}z^{d-2}}
\Big[
     \sum_{k \textrm{ is even}}
     \binom{d-2}{k} (z^2-x^2)^{\frac{k-1}{2}}
     (y^2-x^2)^{\frac{d-2-k}{2}}\\
    &+ \sum_{k \textrm{ is odd}}
    \binom{d-2}{k}    (z^2-x^2)^{\frac{k}{2}}
    (y^2-x^2)^{\frac{d-3-k}{2}}
\Big]
\end{align*}
Let $a=z^2-u^2, b=y^2-u^2$, then
\begin{align*}
     &\int_u^z (1-\frac{u^2}{x^2})^{\frac{d-3}{2}}
     |f(x,y,z)|\d x
     =\frac{1}{y^{d-2}z^{d-2}}
     \int_0^a x^{\frac{d-3}{2}}\Big[ \\
     &\sum_{k \textrm{ is even}}
     \binom{d-2}{k} (a-x)^{\frac{k-1}{2}}
     (b-x)^{\frac{d-2-k}{2}}
     + \sum_{k \textrm{ is odd}}
     \binom{d-2}{k} (a-x)^{\frac{k}{2}}
     (b-x)^{\frac{d-3-k}{2}}
     \Big]\d x\\
     &=\frac{1}{2y^{d-2}z^{d-2}}
     \int_0^a x^{\frac{d-3}{2}}\Big[\frac{(\sqrt{b-x} + \sqrt{a-x})^{d-2}+(\sqrt{b-x} - \sqrt{a-x})^{d-2}}{\sqrt{a-x}}\\
     &+\frac{(\sqrt{b-x} + \sqrt{a-x})^{d-2}-(\sqrt{b-x} - \sqrt{a-x})^{d-2}}{\sqrt{b-x}}\Big] \d x
\end{align*}
Let $t=\sqrt{\frac{a-x}{b-x}}$, we then obtain
\begin{align*}
     &\int_u^z (1-\frac{u^2}{x^2})^{\frac{d-3}{2}}
     |f(x,y,z)|\d x \\
     &=\frac{(b-a)^{\frac{d-1}{2}}}{y^{d-2}z^{d-2}}\int_0^{\sqrt{a/b}}
     \left[\frac{1}{(1-t)^{d-1}}+\frac{1}{(1+t)^{d-1}}\right](a-bt^2)^{\frac{d-3}{2}}\d t
\end{align*}
Let $c=\sqrt{a/b}$ and $n=d-2\geq 0$ in the expression of $h(n,c)$ in \eqref{eq:f_n_c_expression},
we obtain
\begin{align}\label{eq:int_u_x_f_x}
     \int_u^z (1-\frac{u^2}{x^2})^{\frac{d-3}{2}}
     |f(x,y,z)|\d x
     = \frac{1}{y^{d-2}z^{d-2}}h(d-2, \sqrt{a/b}) (1-c^2)^{\frac{n+1}{2}}b^{\frac{n}{2}}
\end{align}
After simplification of \eqref{eq:int_u_x_f_x}, we obtain \eqref{eq:ref_prove_integration}.
\end{proof}

\begin{proof}[Proof of Theorem \ref{thm:H}]
     
We use $\dist(P_1,\dots, P_{d-1})$ to represent
the distance from the origin to the hyperplane passing $P_1,P_2,\dots, P_{d-1}$,
and let $F_1(x)=P(\dist(P_1,\dots, P_{d-1})\linebreak\geq x)$.

Firstly we show that $F_1(x)=K^{d-1}(x)$.
For $d=2$, it is trivial.
By induction we suppose that $F_1(x)=K^{d'-1}(x)$ is true for
$d'\leq d-1$.
Since $d'\leq d$,
the function $K(x)=P(\sqrt{(X^{(1)})^2 + (X^{(2)})^2} \geq x)$
only depends on $d$,
not on $d'$.
Therefore, in $d'$-dimensional space,
from \eqref{eq:K_x} we have $
P(\dist(P_1, \dots, P_{d'-1})\geq x) = \int_{x}^{\infty} K^{d'-2}(x)\lambda_{d'}(x/y)|\d F(y)|
$.
Then the conditional probability $P(\dist(P_1,\dots, P_{d'-1})\geq x \Big\vert |OP_1|=y)
=K^{d'-2}(x)\lambda_{d'}(x/y)$.
For $d$-dimensional space, 
we firstly specify a straight line passing through $P_{d-2}P_{d-1}$,
the space perpendicular 
to this line has $d'=d-1$ dimension while the straight line shrinks to a single point $P'$
in this subspace. Since
$P(\dist(P_1,\dots,P_{d-3},P')\geq x \Big\vert |OP'|=y)=
K^{d-3}(x)\lambda_{d-1}(x/y)$,
we have
\begin{align*}
    F_1(x) = \int_x^{+\infty} K^{d-3}(x) \lambda_{d-1}(\frac{x}{y})|\d F_0(y)|
\end{align*}
From Lemma \ref{lem:K_F_relationship} we obtain $F_1(x) = K^{d-1}(x)$.

To finish the proof and compute $H(x)$, we first specify a hyperplane $P_2P_3\dots P_{d}$,
whose distance to the origin is $y$. The subspace perpendicular to this hyperplane is a 2-D subspace,
and this hyperplane shrinks to a point $P'$ in this 2-D subspace.
Similar to (1.9) of \cite{carnal1970konvexe}, we have
\begin{align}\label{eq:H_eq_n_arccos_geometric}
     H(x) &= \frac{2}{\pi}\int_x^{+\infty}|\d F_1(y)|
     \left[ \int_x^y \arccos\frac{x}{z}|\d K(z)|
     +\int_y^{+\infty}\arccos\frac{x}{y} |\d K(z)|\right]\\
     &=\frac{2}{\pi}\int_x^{+\infty} \arccos\frac{x}{z}
     F_1(z)|\d K(z)|+\frac{2}{\pi}\int_{x}^{+\infty} K(y)F_1(y)\arccos\frac{x}{y}|\d F_1(y)| \notag 
 \end{align}
Using $F_1(x)=K^{d-1}(x)$,
we reduce \eqref{eq:H_eq_n_arccos_geometric} to
 $$
 H(x) = \frac{2}{\pi}\int_x^{+\infty}  \arccos\frac{x}{y}\cdot
 d\cdot  K^{d-1}(y) |\d K(y)|
 $$
 which is exactly \eqref{eq:H_expression_d_dim}.
 
\end{proof}
\section{Computations involved in three distribution tails}
\subsection{Computations involved in polynomial tails}
The techniques used in this subsection are similar to the derivation of $G(x)$
in (2.3) of \cite{carnal1970konvexe}.

\begin{proof}[Proof of Theorem \ref{thm:poly_tails}]
For polynomial tails defined in \eqref{eq:F_poly_tail},
we first compute the asymptotic value of $K(x)$
as follows:
\begin{align*}
     K(x) & = \int_x^{\infty} (1-\frac{x^2}{y^2})^{\frac{d-2}{2}} |\d F(y)| \\
     &= (d-2)\int_x^{\infty} F(y)\frac{x^2}{y^3} (1-\frac{x^2}{y^2})^{\frac{d-4}{2}} \d y\\
     (y=x/t) &= (d-2)x^{-k} \int_0^{1} L(x/t) t^{k+1} (1-t^2)^{(d-4)/2}\d t \\
     & \sim F(x) \frac{d-2}{2} B\left(\frac{k+2}{2}, \frac{d}{2}-1\right) 
\end{align*}
Then applying Theorem \ref{thm:H}, we obtain
\begin{align*}
     H(x) &= \frac{2}{\pi}
     \int_x^{\infty} \arccos\frac{x}{y}
     |\d K^d(y)| \\
     &\sim \frac{2}{\pi}\left(\frac{d-2}{2}\right)^d
     B^d\left(\frac{k+2}{2}, \frac{d}{2}-1\right)
     \int_x^{\infty} \frac{x}{y \sqrt{y^2-x^2}} F^d(y) \d y \\
     &\sim \frac{2}{\pi}\left(\frac{d-2}{2}\right)^d
     B^d\left(\frac{k+2}{2}, \frac{d}{2}-1\right) \frac{1}{2}
     B\left(\frac{1}{2}, \frac{dk+1}{2}\right)L^d(x) x^{-dk} \\
     &\sim \frac{2^d \pi^{(d-1)/2}\Gamma^d(\frac{k}{2}+1)
     \Gamma(\frac{dk+1}{2})}{
         \Gamma^d(\frac{k+1}{2}) \Gamma(\frac{dk}{2}+1)} G(x)^d 
\end{align*}
To simplify the notation, we denote $g(d)=\frac{2^d \pi^{(d-1)/2}\Gamma^d(\frac{k}{2}+1)
\Gamma(\frac{dk+1}{2})}{
    \Gamma^d(\frac{k+1}{2}) \Gamma(\frac{dk}{2}+1)}$.
Let $G(x)=w$, then from \eqref{eq:E_F_N_d_sim}
\begin{align}
     \E[F_N] &\sim g(d)\binom{N}{d} \int_0^{+\infty} 
      (1-w)^{N-d} |\d w^d| \notag \\
      &\sim g(d)\binom{N}{d}d\int_0^{1/2} \exp(-(N-d)w)w^{d-1}\d w
      \notag \\
      &\sim \frac{N!}{(N-d)! (N-d)^d}g(d) \label{eq:Ngd}
\end{align}
And \eqref{eq:efn_poly_second_deri} is obtained from \eqref{eq:Ngd}.
\end{proof}

The following part in this subsection discusses how to derive \eqref{eq:poly_E_F_N_d_infty}.
We first give two lemmas, which are useful to handle the case when $d\to \infty$.
\begin{lemma}\label{lem:Gamma_ratio}
     For fixed $v>0$, we have
     \begin{equation}\label{eq:Gamma_ratio}
         \frac{\Gamma(n+v)}{\Gamma(n)} \sim
         n^v
         \textrm{ as } n \to \infty             
     \end{equation}
 \end{lemma}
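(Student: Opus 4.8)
The plan is to prove this by invoking Stirling's approximation $\Gamma(z) \sim \sqrt{2\pi}\, z^{z-1/2} e^{-z}$ as $z \to \infty$, applied to both $\Gamma(n+v)$ and $\Gamma(n)$. Since $v$ is held fixed, both arguments tend to infinity together as $n\to\infty$, so the asymptotic is legitimate for the numerator and the denominator separately.

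First I would form the ratio and cancel the common factor $\sqrt{2\pi}$, obtaining
\[
\frac{\Gamma(n+v)}{\Gamma(n)} \sim \frac{(n+v)^{n+v-1/2}}{n^{n-1/2}}\, e^{-v}.
\]
Next I would split the power as $(n+v)^{n+v-1/2} = (n+v)^v \cdot (n+v)^{n-1/2}$ and regroup to isolate two elementary limits. The factor $(n+v)^v / n^v \to 1$ gives $(n+v)^v \sim n^v$, which supplies the claimed power of $n$. The remaining factor is $\left(1 + \frac{v}{n}\right)^{n-1/2}$, and the key analytic input is the standard limit $\left(1 + \frac{v}{n}\right)^{n} \to e^v$, combined with $\left(1+\frac{v}{n}\right)^{-1/2} \to 1$, so that $\left(1+\frac{v}{n}\right)^{n-1/2} \to e^v$.

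The only point requiring care is the bookkeeping of the exponentials: the factor $e^{-v}$ coming from $e^{-(n+v)}/e^{-n}$ in Stirling must cancel exactly against the $e^v$ produced by $\left(1+\frac{v}{n}\right)^{n-1/2}$. After this cancellation one is left with $n^v$, which is the assertion, so I expect no genuine obstacle. An equivalent and perhaps cleaner route would be to write $\log\!\big(\Gamma(n+v)/\Gamma(n)\big) = v\,\psi(\xi)$ for some $\xi \in (n, n+v)$ by the mean value theorem, where $\psi$ denotes the digamma function, and then use $\psi(\xi) = \log\xi + O(1/\xi) = \log n + o(1)$ to conclude $\Gamma(n+v)/\Gamma(n) = n^v e^{o(1)} \sim n^v$.
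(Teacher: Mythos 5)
Your proof is correct, but it takes a different route from the paper. The paper does not use Stirling's formula here: it rewrites the claim as the equivalent statement $B(n,v)\sim \Gamma(v)\,n^{-v}$ via the identity $B(n,v)=\Gamma(n)\Gamma(v)/\Gamma(n+v)$, and then estimates the Beta integral $\int_0^1(1-x)^{n-1}x^{v-1}\,\mathrm{d}x$ by a Laplace-type argument, replacing $(1-x)^{n-1}$ by $e^{-nx}$ near $0$ and rescaling to recover $\Gamma(v)n^{-v}$. Your Stirling computation is sound --- the bookkeeping $(n+v)^{n+v-1/2}e^{-v} = (n+v)^v\left(1+\frac{v}{n}\right)^{n-1/2}n^{n-1/2}e^{-v}$ does produce exactly the cancellation of $e^{v}$ against $e^{-v}$ that you anticipate --- and your digamma/mean-value variant is arguably the cleanest of the three. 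The trade-off: Stirling imports a stronger asymptotic than is needed (though the paper does invoke Stirling elsewhere, for the lemma on $N!/\bigl((N-d)!\,(N-d)^d\bigr)$, so this is not foreign to the paper's toolkit), whereas the paper's Beta-integral argument is self-contained modulo the Beta--Gamma identity and an elementary interchange of limit and integral; on the other hand, the paper's route silently assumes one can localize the integral to $[0,\epsilon]$ and pass to the exponential approximation uniformly, a step your approach avoids entirely. Either proof is acceptable.
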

 \begin{proof}
     We use Beta function to prove \eqref{eq:Gamma_ratio}.
     \eqref{eq:Gamma_ratio} is equivalent to
     $B(n, v) \sim \Gamma(v) n^{-v}$.
     From the definition of beta function,
     we have
     \begin{align*}
         B(n,v) &=\int_0^1 (1-x)^{n-1} x^{v-1} \d x \\
         &\sim \int_0^{\epsilon} \exp(-nx) x^{v-1}\d x \textrm{ for given } \epsilon>0\\
         & \sim n^{-v} \int_0^{n\epsilon} \exp(-x)x^{v-1}\d x\\
         &\sim \Gamma(v) n^{-v}
     \end{align*}
 \end{proof}
We can use Stirling's formula to prove that
\begin{lemma}
\begin{equation}\label{eq:N_N_d_d}
     \frac{N!}{(N-d)! (N-d)^d} \sim 1
\end{equation}
when $N/d^2 \to \infty$.
\end{lemma}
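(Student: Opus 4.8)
The plan is to evaluate the ratio directly rather than invoking the fixed-$v$ asymptotic of Lemma~\ref{lem:Gamma_ratio}, which does not apply here since $d\to\infty$. I first rewrite $\frac{N!}{(N-d)!(N-d)^d}=\prod_{j=0}^{d-1}\frac{N-j}{N-d}=\prod_{j=0}^{d-1}\left(1+\frac{d-j}{N-d}\right)$, turning the ratio into a product of $d$ factors, each slightly larger than $1$. Taking logarithms converts this into the single sum $S:=\sum_{j=0}^{d-1}\log\!\left(1+\frac{d-j}{N-d}\right)$, and the whole task reduces to showing $S\to 0$ under the hypothesis $N/d^2\to\infty$.

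To control $S$, I would use the two-sided elementary bound $x-\tfrac{x^2}{2}\le\log(1+x)\le x$ valid for $x\ge 0$, applied with $x=\frac{d-j}{N-d}$. The upper bound gives $S\le\frac{1}{N-d}\sum_{j=0}^{d-1}(d-j)=\frac{d(d+1)}{2(N-d)}$, and the lower bound subtracts an error no larger than $\frac{1}{2(N-d)^2}\sum_{j=0}^{d-1}(d-j)^2=\frac{d(d+1)(2d+1)}{12(N-d)^2}$. Thus $S$ is squeezed between two quantities of order $\frac{d^2}{N}$ and $\frac{d^2}{N}-O\!\left(\frac{d^3}{N^2}\right)$ respectively.

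The final step is to observe that $N/d^2\to\infty$ forces $N-d\sim N$ and $\frac{d^2}{N-d}\to 0$, so the leading bound $\frac{d(d+1)}{2(N-d)}\to 0$; moreover $\frac{d^3}{(N-d)^2}\to 0$ because $N\gg d^2$ implies $(N-d)^2\gg d^4\gg d^3$. Hence both the upper and lower bounds tend to $0$, giving $S\to 0$ and therefore $\frac{N!}{(N-d)!(N-d)^d}=e^{S}\to 1$, as claimed.

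Alternatively, following the Stirling route hinted in the statement, I could apply $n!\sim\sqrt{2\pi n}\,n^n e^{-n}$ to numerator and denominator; the prefactors collapse to $\sqrt{N/(N-d)}\to 1$, the exponentials yield a factor $e^{-d}$, and the power terms combine with $(N-d)^d$ into $(1-d/N)^{-N}$, whose logarithm expands as $d+\frac{d^2}{2N}+O\!\left(\frac{d^3}{N^2}\right)$. The $+d$ cancels the $-d$ from $e^{-d}$, leaving an exponent $\frac{d^2}{2N}+O\!\left(\frac{d^3}{N^2}\right)\to 0$. The main obstacle in either approach is the same: verifying that the second-order remainder $d^3/N^2$ (not just the leading $d^2/N$) is killed by the hypothesis, which is exactly where $N\gg d^2$ rather than the weaker $N\gg d$ is needed.
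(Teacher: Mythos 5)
Your proof is correct, and your primary route is genuinely different from the paper's. The paper applies Stirling's formula to $N!$ and $(N-d)!$, reduces the ratio to $e^{-d}\left(1+\frac{d}{N-d}\right)^{N}$, and then approximates the logarithm to first order to land on $\exp\!\left(\frac{d^2}{N-d}\right)\to 1$; your alternative sketch at the end is essentially this computation, carried one order further. Your main argument instead writes the ratio as the exact finite product $\prod_{j=0}^{d-1}\left(1+\frac{d-j}{N-d}\right)$ and bounds $\log$ of each factor by $x-\tfrac{x^2}{2}\le\log(1+x)\le x$; this is more elementary (no Stirling, no asymptotic expansions of factorials), gives non-asymptotic two-sided bounds $0\le S\le \frac{d(d+1)}{2(N-d)}$ valid for every $N>d$, and makes it transparent that $N/d^2\to\infty$ is not only sufficient but necessary, since $S$ is genuinely of order $d^2/N$. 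One small economy you could note: because every factor in the product is at least $1$, you have $S\ge 0$ for free, so the upper bound $\log(1+x)\le x$ alone already squeezes $S\to 0$ and the second-order lower bound (and hence the $d^3/N^2$ remainder discussion) is not actually needed for the conclusion, though it correctly identifies the true size of $S$.
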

\begin{proof}
     \begin{align*}
          \frac{N!}{(N-d)! (N-d)^d}
          &\sim \sqrt{\frac{N}{N-d}}\frac{N^N}{ e^d (N-d)^{N}}\\
          &\sim e^{-d} (1+\frac{d}{N-d})^{N} \\
          &\sim e^{-d} \exp(\frac{Nd}{N-d}) \\
          & \sim \exp(\frac{d^2}{N-d})
     \end{align*}
From the condition $N/d^2 \to \infty$,
we obtain \eqref{eq:N_N_d_d}.
\end{proof}
Combining \eqref{eq:Gamma_ratio} with \eqref{eq:N_N_d_d}, we obtain \eqref{eq:poly_E_F_N_d_infty}
from \eqref{eq:Ngd}.

\subsection{Computations involved in exponential tails}

\begin{proof}[Proof of Theorem \ref{thm:exponential_tails}]
For exponential tails, following Carnal \cite{carnal1970konvexe},
we make
the substitution $x=L(s), y=L(\sigma)$, then $\sigma=1/F(y)$.
Substituting them into \eqref{eq:K_x}, we have
\begin{align*}
    K(x)  & = \int_s^{\infty} \left(1-\frac{L(s)^2}{L(\sigma)^2} \right)^{\frac{d-2}{2}} \frac{\d \sigma}{\sigma^2} \\
\end{align*}
As $x\to \infty$, $s\to \infty$,
then $L(s)/L(\sigma) \to 1$ for $\sigma <As$,
where $A$ is an arbitrary constant.
\begin{align*}
     K(x)& \sim 2^{\frac{d-2}{2}} 
     \int_s^{As} \left(1-\frac{L(s)}{L(\sigma)}\right)^{\frac{d-2}{2}}
     \frac{\d \sigma}{\sigma^2} \\
     \textrm{ (3.6) of \cite{dwyer1991convex} }& 
     \sim  2^{\frac{d-2}{2}}  \int_s^{As} (\epsilon(s) \log\frac{\sigma}{s})^{\frac{d-2}{2}}\frac{\d \sigma}{\sigma^2}\\
     &\sim 2^{\frac{d-2}{2}} \epsilon(s)^{\frac{d}{2}-1} \frac{1}{s} \int_1^{A} (\log x )^{\frac{d-2}{2}}\frac{\d x}{x^2}\\
     & \sim 2^{\frac{d}{2}-1} \Gamma\left(\frac{d}{2}\right)F(x) v(x)^{\frac{d}{2}-1}    
\end{align*}
Similar techniques can be applied to compute the asymptotic value of $H(x)$.
\begin{align*}
    H(x) & = \frac{2}{\pi} \int_x^{+\infty}\frac{x K^d(y)}{y^2 \sqrt{1-\frac{x^2}{y^2}}}\d y\\
    &=\frac{2}{\pi}\left[2^{\frac{d}{2}-1} \Gamma\left(\frac{d}{2}\right)\right]^d
    \int_s^{+\infty}\frac{L(s) \epsilon^{1+\frac{d(d-2)}{2}}(\sigma)}{L(\sigma) \sqrt{1-\frac{L^2(s)}{L^2(\sigma)}}\sigma^{d+1}} \d \sigma
    \textrm{ since } \d y=L(\sigma)\frac{\epsilon(\sigma)}{\sigma}\d \sigma \\
    &\sim  \frac{\sqrt{2}}{\pi}
    \left[2^{\frac{d}{2}-1} \Gamma\left(\frac{d}{2}\right)\right]^d
    \epsilon^{\frac{d(d-2)+1}{2}}(s)
    \int_s^{+\infty} \left(\log\frac{\sigma}{s}
    \right)^{-\frac{1}{2}}
    \sigma^{-(d+1)}\d \sigma \\
    &= \frac{\sqrt{2}}{\sqrt{\pi d}}\left[2^{\frac{d}{2}-1} \Gamma\left(\frac{d}{2}\right)\right]^d
    \epsilon^{\frac{d(d-2)+1}{2}}(s)s^{-d}\\
    &= \frac{2^{\frac{d(d-2)+1}{2}}}{\sqrt{\pi d}}\Gamma^d\left(\frac{d}{2}\right)
    v^{\frac{d(d-2)+1}{2}}(x)F^d(x)
\end{align*}
From \eqref{eq:G_x_exp}, we obtain \eqref{eq:H_x_exp}.

Dwyer \cite{dwyer1991convex} has shown that for exponential tails, $v(x) \sim \epsilon(1/G(x))$ as $x\to \infty$.
From \eqref{eq:E_F_N_d_sim},
we use the Abel-Tauber theorem
\cite{omey1989abelian} and obtain
\begin{align*}
    \E[F_N] & \sim \frac{\pi^{\frac{d-1}{2}} 2^{\frac{d+1}{2}}}{\sqrt{d}}\int_0^{1/2} \exp(-(N-d)w) \d [w^d \epsilon(1/w)^{\frac{-d+1}{2}}] \\
    &\sim \frac{\pi^{\frac{d-1}{2}} 2^{\frac{d+1}{2}}}{\sqrt{d}} \binom{N}{d}\frac{\Gamma(d+1)}{(N-d)^d} \epsilon(N-d)^{\frac{-d+1}{2}}
\end{align*}
\end{proof}

To obtain $\E[F_N]$ when $d\to \infty$,
we need the following lemma.
\begin{lemma}\label{lem:ratio_epsilon}
     Suppose $\epsilon(x)$ is a slowly varying function
     and $N(d)\gg d^2$,
     then $\epsilon(N-d)^{d} \sim \epsilon(N)^d$.
\end{lemma}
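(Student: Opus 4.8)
The plan is to take logarithms and reduce the claim $\epsilon(N-d)^d \sim \epsilon(N)^d$ (as $d\to\infty$, with $N=N(d)$) to showing that $d\,\log\frac{\epsilon(N-d)}{\epsilon(N)} \to 0$, equivalently $\bigl(\epsilon(N-d)/\epsilon(N)\bigr)^{d} \to 1$. The delicate point is that slow variation only yields $\epsilon(N-d)/\epsilon(N)\to 1$ (since $(N-d)/N \to 1$ as $d^2/N\to 0$), and a quantity tending to $1$ raised to the growing power $d$ need not preserve the limit. So the substance of the argument is to control the \emph{rate} at which $\epsilon(N-d)/\epsilon(N)$ approaches $1$ and show it beats the growth of $d$.

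First I would invoke Karamata's representation theorem, writing $\log\epsilon(x) = c(x) + \int_a^x \tfrac{\eta(t)}{t}\,\d t$ with $\eta(t)\to 0$ as $t\to\infty$ and $c(x)$ tending to a finite constant. Since the $\epsilon$ relevant here arises as the derivative expression \eqref{eq:epsilon_s} and is smooth under the regularity conditions imported from \cite{carnal1970konvexe}, I would use the normalized form, in which the amplitude is a genuine constant, so that $\log\frac{\epsilon(N)}{\epsilon(N-d)} = \int_{N-d}^{N} \tfrac{\eta(t)}{t}\,\d t$; equivalently, setting $\phi(t)=t\epsilon'(t)/\epsilon(t)\to 0$, one has $\log\frac{\epsilon(N)}{\epsilon(N-d)} = \int_{N-d}^{N}\phi(t)\,\tfrac{\d t}{t}$.

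Next I would estimate this integral. Because $N\gg d^{2}$ we have $N-d\to\infty$, so for any fixed $\delta>0$ eventually $|\eta(t)|<\delta$ on $[N-d,N]$, whence
\[
\Bigl|\log\tfrac{\epsilon(N-d)}{\epsilon(N)}\Bigr| \le \delta\int_{N-d}^{N}\frac{\d t}{t} = \delta\,\log\frac{N}{N-d} = -\delta\,\log\Bigl(1-\tfrac{d}{N}\Bigr).
\]
Since $d/N\to 0$, the right side is asymptotic to $\delta\,d/N$, so multiplying by $d$ produces a bound of order $\delta\,d^{2}/N$. The hypothesis $N\gg d^{2}$ forces $d^{2}/N\to 0$, hence $d\,\bigl|\log\frac{\epsilon(N-d)}{\epsilon(N)}\bigr|\to 0$, and exponentiating gives the conclusion.

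The main obstacle is the amplitude term $c(x)$ in Karamata's representation: a general slowly varying function may satisfy $c(x)\to c$ while $c(x)$ oscillates, in which case $\bigl(c(N-d)/c(N)\bigr)^{d}$ need not converge to $1$, and the bare hypothesis of slow variation would be insufficient. I expect to dispose of this by appealing to the normalized (differentiable) version of slow variation available for the $\epsilon$ produced by the regularity assumptions of \cite{carnal1970konvexe} — so that no separate oscillating amplitude survives and the whole contribution is the $\eta$-integral controlled above. With that in hand, the remaining steps are only the routine $d^{2}/N\to 0$ estimate.
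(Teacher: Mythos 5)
Your proof is correct and follows essentially the same route as the paper's: both write the ratio $\epsilon(N)/\epsilon(N-d)$ via the Karamata representation as $\exp$ of an integral of a vanishing function against $\d t/t$, bound that integral by $o(1)\cdot\log\frac{N}{N-d}\sim o(1)\cdot d/N$, and conclude from $d^2/N\to 0$. The only difference is that you explicitly flag the oscillating-amplitude term $c(x)$ and justify discarding it via the normalized form, a point the paper's proof silently assumes.
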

\begin{proof}
     Since $\epsilon(x)$ is slowly varying, there exists
     a bounded function $u(x)$ such that
     $\epsilon(y)/\epsilon(x)=\exp(\int_{x}^y \frac{u(t)}{t}\d t)$.
     Besides, $\lim_{x\to \infty} u(x) = 0$.
     By the mean value theorem, there exists $\xi \in [x,y]$ such that
     $\epsilon(y)/\epsilon(x)=\exp(u(\xi)\int_{x}^y \frac{1}{t}\d t)
     =\exp(u(\xi)\log(y/x))$. Let $x=N-d, y=N$, and we obtain
     \begin{equation*}
          \frac{\epsilon(N)^d}{\epsilon(N-d)^d}
          = \exp \left(u(\xi) \log\left(\frac{N}{N-d} \right)d \right)
     \end{equation*}
     As $N-d\to \infty$, $u(\xi)\to 0$. In addition,
     $\log(\frac{N}{N-d}) \sim \frac{d}{N-d} \sim \frac{d}{N}$.
     Therefore,
     $\frac{\epsilon(N)^d}{\epsilon(N-d)^d}\sim \exp(\frac{u(\xi)d^2}{N})
     $. Since $N/d^2 \to \infty, \frac{u(\xi)d^2}{N} \to 0$ and
     $\frac{\epsilon(N)^d}{\epsilon(N-d)^d}\to 1$.
\end{proof}
By Lemma \ref{lem:ratio_epsilon}, we can show that
$\epsilon(N-d)^{\frac{-d+1}{2}}\sim \epsilon(N)^{\frac{-d+1}{2}}$.
When $d\to\infty$ and $N/d^2\to \infty$, we further obtain \eqref{eq:d_infty_exp_E_F_N}.
 
\subsection{Computations involved in truncated tails}
\begin{proof}[Proof of Theorem \ref{thm:truncated_tails}]
First we compute $K(x)$ as follows
\begin{align*}
    K(1-x)  & = \int_{1-x}^1 \left(1-\frac{(1-x)^2}{y^2}\right)^{\frac{d-2}{2}} |\d F(y)| \\
    & \sim 2^{\frac{d-2}{2}} \int_{1-x}^1 (y-1+x)^{\frac{d-2}{2}} |\d F(y)| \\
    & \sim 2^{\frac{d-2}{2}} \int_0^x (x-y)^{\frac{d-2}{2}} |\d F(1-y)| \\
    & \sim 2^{\frac{d-2}{2}} \frac{d-2}{2} \int_0^x y^k L(1/y) (x-y)^{\frac{d-4}{2}} \d y\\
    & \sim (d-2)2^{\frac{d}{2}-2} B(k+1,\frac{d}{2}-1)x^{k+\frac{d}{2}-1}L(1/x)\\
    & = k 2^{\frac{d}{2}-1} B(k, \frac{d}{2}) x^{k+\frac{d}{2}-1} L(1/x) 
\end{align*}
Then using $\arccos x \sim \sqrt{2(1-x)}$ as $x\to 0^+$,
we have
\begin{align*}
     H(1-x) & = \frac{2}{\pi} \int_{1-x}^{1}
     \arccos\frac{1-x}{y}
     |\d K^d(y)| \\
     &\sim \frac{2}{\pi}\int_0^x \sqrt{2}\sqrt{x-y} |\d K^d(1-y)| \\
     &\sim \frac{\sqrt{2}}{\pi} k^d
     2^{d(\frac{d}{2}-1)} B^d(k, \frac{d}{2})
      \int_0^x \frac{1}{\sqrt{x-y}} y^{d(k+\frac{d}{2}-1)} L^d(1/y) \d y\\
     &\sim \frac{k^d}{\pi}
     2^{\frac{1}{2} + d(\frac{d}{2}-1)} B^d(k, \frac{d}{2})
     B\left( \frac{1}{2},
     d(k+\frac{d}{2} -1)+1 \right) L^d(1/x) x^{d(k+\frac{d}{2}-1)+\frac{1}{2}}
\end{align*}
which is \eqref{eq:truncated_H_1_x}.

From \eqref{eq:truncated_G_1_x} and \eqref{eq:truncated_H_1_x}, it follows that
\begin{equation}
    H(1-x) \sim ba^{-d} G(1-x)^d x^{\frac{1-d}{2}}
\end{equation}
Let $G(1-x)=w$, similar to the analysis in (2.7) of
\cite{carnal1970konvexe}, we obtain
\begin{equation*}
     x \sim [\frac{w}{a} L(w^{-1})]^{\frac{2}{2k+d-1}} \textrm{ as } w \to 0
\end{equation*}
Finally, 
we apply the Abel-Tauber theorem
\cite{omey1989abelian}, and from \eqref{eq:E_F_N_d_sim} we obtain
\begin{align}
    \E[F_N] &= \binom{N}{d}\int_0^1 (1 - G(1-x))^{N-d} |\d H(1-x)| 
    \notag \\
    & \sim \binom{N}{d} \frac{b}{a^d} \int_0^{1/2} \exp(-(N-d)w) \d [w^d (\frac{w}{a} L(w^{-1}))^{\frac{1-d}{2k+d-1}} ]
    \notag \\
    &\sim \frac{\binom{N}{d}}{(N-d)^d} ba^{-d+\frac{d-1}{2k+d-1}}
    \Gamma 
    \left(d+1-\frac{d-1}{2k+d-1}\right)
    (N-d)^{\frac{d-1}{2k+d-1}}
    L(N-d)
    ^{\frac{d-1}{2k+d-1}}
    \label{eq:intermediate_efn_truncated}
\end{align}
When $N\to \infty$ and $d$ is a constant, we obtain \eqref{eq:efn_truncated_formula}.
\end{proof}

The asymptotic value of $\E[F_N]$ when $d\to \infty$
is derived as follows:

From \eqref{eq:a} and \eqref{eq:b}, it follows that
\begin{equation}
     ba^{-d}
     = \frac{\pi^{(d-1)/2} 2^{(d+1)/2}
     \Gamma^d(k+\frac{d+1}{2})
     \Gamma(d(k+\frac{d}{2}-1)+1)}{\Gamma^d(k+\frac{d}{2})\Gamma(d(k+\frac{d}{2}-1)+ \frac{3}{2})}
 \end{equation}
 When $d\to \infty$, using Lemma \ref{lem:Gamma_ratio},
 we obtain the asymptotic value of $ba^{-d}$ as
 \begin{equation}
     ba^{-d} \sim 2\pi^{(d-1)/2}
     d^{-1}e^kd^{d/2}
 \end{equation}
 From \eqref{eq:a}, the asymptotic value of $a$ is given as
 \begin{equation}
     a \sim \pi^{-1/2} 2^{\frac{d-2+2k}{2}} k \Gamma(k) d^{-k-\frac{1}{2}}
 \end{equation}

When $d\to \infty$ and $N/d^2\to \infty$,
from \eqref{eq:intermediate_efn_truncated}
we have $\E[F_N] \sim ba^{-d} a N^{\frac{d-1}{2k+d-1}}
L(N)
^{\frac{d-1}{2k+d-1}}$.
Then by a slight simplification,
\eqref{eq:truncated_d_inf} is obtained.
 




\fund 
\noindent This research was funded in part by the Shenzhen Science and Technology Program under Grant KQTD20170810150821146,
National Key R\&D Program of China under Grant 2021YFA0715202
and High-end Foreign Expert Talent Introduction Plan under Grant G2021032013L.

\competing 
\noindent There were no competing interests to declare which arose during the preparation or publication process of this article.



%
%
%
%

\bibliographystyle{APT}

\end{document}